\newtheorem{theorem}{Theorem}[section]
\newtheorem{lemma}[theorem]{Lemma}
\newtheorem{corollary}[theorem]{Corollary}
\theoremstyle{definition}
\theoremstyle{remark}
\newtheorem{remark}[theorem]{Remark}
\numberwithin{equation}{section}
\begin{document}

\title [Some generalizations of the Aluthge transform of operators  ]{Some generalizations of the Aluthge transform of operators and their consequences}

\author[ M. Bakherad and  K. Shebrawi   ]{Mojtaba Bakherad$^1$ and Khalid Shebrawi$^2$}

\address{ $^1$Department of Mathematics, Faculty of Mathematics, University of Sistan and Baluchestan, Zahedan, I.R.Iran.}
\email{mojtaba.bakherad@yahoo.com; bakherad@member.ams.org}

\address{ $^2$Department of Mathematics, Al-Balqa' Applied University, Salt, Jordan.}
\email{khalid@bau.edu.jo; shebrawi@gmail.com}

\subjclass[2010]{Primary 47A12,  Secondary 47A63, 47A30 }

\keywords{Aluthge transform, Numerical radius, Operator matrices, Polar decomposition.}
\begin{abstract}

Let $A = U |A|$  be the polar decomposition of $A$. The
Aluthge transform of the operator $A$, denoted by $\tilde{A}$, is defined as
$\tilde{A} =|A|^{\frac{1}{2}} U |A|^{\frac{1}{2}}$.
In this paper, first we generalize the definition of Aluthge transform for
non-negative continuous functions $f, g$ such that $f(x)g(x)=x\,\,(x\geq0)$. Then, by using of this definition, we get some numerical radius inequalities. Among other inequalities, it is shown that if $A$ is bounded linear operator on a complex Hilbert space ${\mathscr H}$, then
 \begin{equation*}
h\left( w(A)\right) \leq \frac{1}{4}\left\Vert h\left( g^{2}\left( \left\vert
A\right\vert \right) \right) +h\left( f^{2}\left( \left\vert A\right\vert
\right) \right) \right\Vert +\frac{1}{2}h\left( w\left( \tilde{A}_{f,g}\right)
\right) ,
\end{equation*}
where $f, g$ are non-negative continuous functions such that $f(x)g(x)=x\,\,(x\geq 0)$,  $h$ is a non-negative non-decreasing convex function on $[0,\infty )$ and $\tilde{A}_{f,g} =f(|A|) U g(|A|)$.
\end{abstract} \maketitle
\section{Introduction}
Let ${\mathbb B}(\mathscr H)$ denote the $C^{*}$-algebra of all bounded linear operators on a complex Hilbert space ${\mathscr H}$ with an inner product $\langle \cdot,\cdot\rangle$ and the corresponding norm $ \Vert \cdot \Vert $. In the case when ${\rm dim}{\mathscr H}=n$, we identify ${\mathbb B}({\mathscr H})$ with the matrix algebra $\mathbb{M}_n$ of all $n\times n$ matrices with entries in
the complex field.
For an operator $A\in{\mathbb B}(\mathscr H)$, let $A = U |A|$ (U is a partial
isometry with $\textrm{ker} U = \textrm{rng} |A|^\bot$) be the polar decomposition of $A$. The
Aluthge transform of the operator $A$, denoted by $\tilde{A}$, is defined as
$\tilde{A} =|A|^{\frac{1}{2}} U |A|^{\frac{1}{2}}.$
 In \cite{okobo}, Okubo introduced a more general notion called $t$-Aluthge transform which has later
been studied also in detail. This is defined for any $0<t\leq1$ by
$\tilde{A}_t =|A|^{t} U |A|^{1-t}$. Clearly, for $t=\frac{1}{2}$ we obtain the usual Aluthge transform. As for the case $t=1$, the operator $\tilde{A}_1=|A|U$  is called the Duggal transform of $A\in{\mathbb B}(\mathscr H)$. For  $A\in{\mathbb B}(\mathscr H)$, we generalize the Aluthge transform of the operator $A$ to the form
\begin{align*}
\tilde{A}_{f,g} =f(|A|) U g(|A|),
\end{align*}
in which $f, g$ are non-negative continuous functions such that $f(x)g(x)=x\,\,(x\geq0).$ The numerical radius of $A\in {\mathbb B}({\mathscr H})$ is defined by
\begin{align*}
w(A):=\sup\{| \langle Ax, x\rangle| : x\in {\mathscr H}, \Vert x \Vert=1\}.
\end{align*}
It is well known that $w(\,\cdot\,)$ defines a norm on ${\mathbb B}({\mathscr H})$, which is equivalent to the usual operator norm $\Vert \cdot \Vert$. In fact, for any $A\in {\mathbb B}({\mathscr H})$,
$
\frac{1}{2}\Vert A \Vert\leq w(A) \leq\Vert A \Vert$;
 see \cite{gof}. Let $r (\cdot)$ denote to the spectral radius. It is well known that for every operator
$A\in{\mathbb B}({\mathscr H})$, we have $r(A)\leq w(A)$. An important inequality for $\omega(A)$ is the power inequality stating that $\omega(A^n)\leq \omega(A)^n\,\,(n=1,2,\cdots)$. The quantity $w(A)$ is useful in studying perturbation, convergence and approximation problems as well as integrative method, etc. For more information see \cite{Pol, hal, HKS2, HKS1} and references therein.\\
 Let $A, B, C, D\in {\mathbb B}({\mathscr H})$. The operator matrices  $\left[\begin{array}{cc} A&0\\ 0&D \end{array}\right]$  and $\left[\begin{array}{cc}  0&B\\ C&0 \end{array}\right]$ are called the diagonal and off-diagonal parts of the operator matrix $\left[\begin{array}{cc} A&B\\ C&D \end{array}\right]$, respectively.\\
In \cite{KIT1}, It has been shown that if $A$ is an operator in ${\mathbb B}({\mathscr H})$, then
\begin{eqnarray}\label{abcd}
w(A)\leq \frac{1}{2}\left(\|A\|+\|A^2\|^\frac{1}{2}\right).
\end{eqnarray}%
Several refinements and generalizations of inequality \eqref{abcd} have been given; see \cite{omer1,bakh,sheb,YAM}. Yamazaki \cite{YAM} showed that for  $A\in{\mathbb B}({\mathscr H})$ and  $t\in[0,1]$ we have
\begin{eqnarray}\label{yam-sheb}
w(A)\leq \frac{1}{2}\left(\|A\|+w(\tilde{A}_t)\right).
\end{eqnarray}%
 Davidson and Power \cite{dav} proved that if $A$ and $B$ are positive operators in ${\mathbb B}({\mathscr H})$, then
\begin{eqnarray}\label{abcd12}
\Vert A+B\Vert\leq \max\{\Vert A\Vert,\Vert B\Vert \}+\Vert AB\Vert^\frac{1}{2}.
\end{eqnarray}%
Inequality \eqref{abcd12} has been generalized in \cite{omer2,sheb1}. In \cite{sheb1}, the author extended this inequality to the form
\begin{eqnarray}\label{abcd123}
\Vert A+B^*\Vert\leq \max\{\Vert A\Vert,\Vert B\Vert \}+\frac{1}{2}\left(\left\Vert|A|^t|B^*|^{1-t}\right\Vert+ \left\Vert|A^*|^{1-t}|B|^t\right\Vert\right),
\end{eqnarray}%
in which $A, B\in{\mathbb B}({\mathscr H})$ and $t\in[0,1]$.

In this paper, by applying the generalized Aluthge transform of operators, we establish some inequalities involving the numerical radius. In particular, we  extend inequality \eqref{yam-sheb} and \eqref{abcd123} for two non-negative continuous functions. We also show some upper bounds for the numerical radius of $2\times2$ operators matrices.


\section{main results}
To prove our numerical radius inequalities, we need several known lemmas.
\begin{lemma}\cite[Theorem 2.2]{omer1}\label{kit1}
Let $X, Y, S, T\in {\mathbb B}({\mathscr H})$. Then
\begin{align*}
r(XY+ST)\leq\frac{1}{2}\left(w(YX)+w(TS)\right)+\frac{1}{2}\sqrt{\left(w(YX)-w(TS)\right)^2+4\Vert YS\Vert\Vert TX\Vert}.
\end{align*}
\end{lemma}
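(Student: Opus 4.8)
The plan is to reduce the spectral radius of the sum $XY+ST$ to that of a single $2\times 2$ block operator matrix and then to estimate the latter by coupling the similarity–invariance of the spectral radius with a numerical radius bound for block matrices. To carry out the reduction, I would introduce $P=\left[\begin{smallmatrix} X & S\\ 0 & 0\end{smallmatrix}\right]$ and $Q=\left[\begin{smallmatrix} Y & 0\\ T & 0\end{smallmatrix}\right]$ on $\mathscr H\oplus\mathscr H$ and compute $PQ=\left[\begin{smallmatrix} XY+ST & 0\\ 0 & 0\end{smallmatrix}\right]$ and $QP=\left[\begin{smallmatrix} YX & YS\\ TX & TS\end{smallmatrix}\right]$. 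Since $PQ$ and $QP$ have the same nonzero spectrum, $r(PQ)=r(QP)$; and as the spectral radius of a block-diagonal operator is the larger of its two diagonal radii, this yields $r(XY+ST)=r(M)$, where $M=\left[\begin{smallmatrix} YX & YS\\ TX & TS\end{smallmatrix}\right]$. This is the structural heart of the argument, turning a sum of four operators into one matrix whose blocks are exactly the quantities appearing on the right-hand side.

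Next I would establish the block numerical radius inequality
\[
w\!\left(\left[\begin{smallmatrix} A & B\\ C & D\end{smallmatrix}\right]\right)\le \tfrac12\big(w(A)+w(D)\big)+\tfrac12\sqrt{\big(w(A)-w(D)\big)^2+\big(\|B\|+\|C\|\big)^2}.
\]
For a unit vector $z=(x,y)\in\mathscr H\oplus\mathscr H$ one expands $\langle Nz,z\rangle=\langle Ax,x\rangle+\langle By,x\rangle+\langle Cx,y\rangle+\langle Dy,y\rangle$ and bounds it by $w(A)\|x\|^2+w(D)\|y\|^2+(\|B\|+\|C\|)\|x\|\,\|y\|$. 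The right-hand side is a quadratic form in $(\|x\|,\|y\|)$, and its maximum over the unit circle equals the larger eigenvalue of the nonnegative symmetric $2\times 2$ matrix with diagonal $w(A),w(D)$ and off-diagonal $(\|B\|+\|C\|)/2$, which is precisely the displayed bound.

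Finally, I would use that $r$ is invariant under similarity. Conjugating $M$ by $D_s=\mathrm{diag}(sI,I)$ with $s>0$ leaves $r(M)$ fixed while replacing the off-diagonal blocks by $s^{-1}YS$ and $sTX$; applying $r\le w$ together with the block inequality then gives, for every $s>0$,
\[
r(XY+ST)\le \tfrac12\big(w(YX)+w(TS)\big)+\tfrac12\sqrt{\big(w(YX)-w(TS)\big)^2+\big(s^{-1}\|YS\|+s\|TX\|\big)^2}.
\]
The arithmetic–geometric mean inequality minimizes $s^{-1}\|YS\|+s\|TX\|$ at the value $2\sqrt{\|YS\|\,\|TX\|}$ (attained at $s=\sqrt{\|YS\|/\|TX\|}$, the degenerate cases being handled by continuity), so the squared term becomes $4\|YS\|\,\|TX\|$ and the claim follows. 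The main obstacle, and the only genuinely clever point, is this last coupling: since the numerical radius is merely unitarily invariant, the term $(\|B\|+\|C\|)^2$ produced by the block estimate cannot be sharpened on its own; it is the full similarity–invariance of $r(\cdot)$ that licenses the rescaling needed to replace it by the sharper $4\|B\|\,\|C\|$.
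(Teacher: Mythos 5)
The paper itself contains no proof of this lemma---it is imported verbatim from \cite[Theorem 2.2]{omer1}---so the only comparison available is with that source, whose argument your write-up essentially reproduces: the dilation identity $r(XY+ST)=r(PQ)=r(QP)$ with $QP=\left[\begin{smallmatrix} YX & YS\\ TX & TS\end{smallmatrix}\right]$, the numerical radius bound for $2\times 2$ operator matrices (a known inequality of Hirzallah--Kittaneh--Shebrawi type, cf.\ \cite{HKS1}), and a scaling step (your similarity by $\mathrm{diag}(sI,I)$; equivalently one may replace $S$ by $sS$ and $T$ by $s^{-1}T$, which leaves $ST$ and $TS$ unchanged) followed by the arithmetic--geometric mean optimization that sharpens $\bigl(\Vert YS\Vert+\Vert TX\Vert\bigr)^{2}$ to $4\Vert YS\Vert\Vert TX\Vert$. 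All three steps check out, including the largest-eigenvalue computation for the $2\times 2$ quadratic form in $(\Vert x\Vert,\Vert y\Vert)$ and the degenerate cases $\Vert YS\Vert\Vert TX\Vert=0$, which are covered by letting $s\to 0$ or $s\to\infty$ since the bound holds for every $s>0$.
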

\begin{lemma}\cite{YAM,KIT1}\label{1}
Let $A\in {\mathbb B}({\mathscr H})$. Then\newline

$(a)\,\,w(A)=\underset{\theta \in
\mathbb{R}
}{\max }\left\Vert \textrm{Re}\left( e^{i\theta }A\right) \right\Vert. $\newline

 $(b)\,\,w\left(\left[\begin{array}{cc}
              0&A\\
              0&0
              \end{array}\right]\right)
              = \frac{1}{2}\Vert A\Vert.$
\end{lemma}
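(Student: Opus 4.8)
The plan is to prove the two parts directly from the definition $w(A)=\sup\{\,|\langle Ax,x\rangle|:\|x\|=1\,\}$, using two elementary inputs: the rotation identity $\max_{\theta\in\mathbb{R}}|\mathrm{Re}(e^{i\theta}z)|=|z|$, valid for every complex number $z$, and the fact that a self-adjoint operator $B$ satisfies $\|B\|=\sup_{\|x\|=1}|\langle Bx,x\rangle|$.

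For part $(a)$, I would first note that for each real $\theta$ the operator $\mathrm{Re}(e^{i\theta}A)=\tfrac12(e^{i\theta}A+e^{-i\theta}A^*)$ is self-adjoint, so by the second input $\|\mathrm{Re}(e^{i\theta}A)\|=\sup_{\|x\|=1}|\langle \mathrm{Re}(e^{i\theta}A)x,x\rangle|=\sup_{\|x\|=1}|\mathrm{Re}(e^{i\theta}\langle Ax,x\rangle)|$. Taking the maximum over $\theta$ and observing that an iterated supremum is just the supremum over the product, I may exchange the order to get $\max_\theta\|\mathrm{Re}(e^{i\theta}A)\|=\sup_{\|x\|=1}\max_\theta|\mathrm{Re}(e^{i\theta}\langle Ax,x\rangle)|$. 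For each fixed unit vector $x$, applying the rotation identity with $z=\langle Ax,x\rangle$ collapses the inner maximum to $|\langle Ax,x\rangle|$, and the remaining supremum over $x$ is precisely $w(A)$.

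For part $(b)$, write $T=\left[\begin{array}{cc}0&A\\0&0\end{array}\right]$ acting on $\mathscr H\oplus\mathscr H$ and test it on a general unit vector $\binom{x}{y}$ with $\|x\|^2+\|y\|^2=1$; a direct computation gives $\langle T\binom{x}{y},\binom{x}{y}\rangle=\langle Ay,x\rangle$. The upper bound then follows from Cauchy--Schwarz together with the arithmetic--geometric mean inequality, $|\langle Ay,x\rangle|\le\|A\|\,\|x\|\,\|y\|\le\tfrac12\|A\|(\|x\|^2+\|y\|^2)=\tfrac12\|A\|$, so $w(T)\le\tfrac12\|A\|$. For the reverse inequality, given $\varepsilon>0$ I would choose a unit vector $y$ with $\|Ay\|>\|A\|-\varepsilon$, set $x=Ay/\|Ay\|$, and evaluate $T$ on the unit vector $\tfrac{1}{\sqrt2}\binom{x}{y}$, obtaining $\tfrac12\|Ay\|>\tfrac12(\|A\|-\varepsilon)$; letting $\varepsilon\to0$ gives $w(T)\ge\tfrac12\|A\|$.

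Both arguments are essentially computational, and I do not expect a serious obstacle. The one genuinely non-trivial ingredient is the identity $\|B\|=w(B)$ for self-adjoint $B$ used in part $(a)$; this is standard (it follows, for instance, from the spectral theorem, the spectrum of a self-adjoint operator being real with norm equal to spectral radius), and I would simply invoke it. The interchange of $\max_\theta$ and $\sup_x$ is harmless, since both sides equal the joint supremum over the pair $(x,\theta)$, so no minimax subtlety arises.
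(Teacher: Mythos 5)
Your proof is correct, and since the paper states this lemma without proof (quoting part (a) from Yamazaki \cite{YAM} and part (b) from Kittaneh \cite{KIT1}), there is nothing in-paper to diverge from: your arguments are essentially the standard ones in those sources, namely self-adjointness of $\textrm{Re}(e^{i\theta}A)$ plus $\|B\|=\sup_{\|x\|=1}|\langle Bx,x\rangle|$ for self-adjoint $B$ together with the rotation identity for (a), and the direct computation $\langle T(x,y),(x,y)\rangle=\langle Ay,x\rangle$ with Cauchy--Schwarz and a near-maximizing vector for (b). One micro-point: the exchange of suprema only yields equality of \emph{suprema} over $\theta$, so to justify the stated $\max$ you should add that $\theta\mapsto\left\Vert \textrm{Re}\left(e^{i\theta}A\right)\right\Vert$ is $2\pi$-periodic and continuous, since $\left\Vert \textrm{Re}\left(e^{i\theta}A\right)-\textrm{Re}\left(e^{i\theta'}A\right)\right\Vert\leq|e^{i\theta}-e^{i\theta'}|\,\Vert A\Vert$, whence the supremum is attained.
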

\textbf{Polarization identity}: For all $x,y\in {\mathscr H}$, we have%
\begin{equation*}
\left\langle x,y\right\rangle =\frac{1}{4}\sum_{k=0}^{3}\left\Vert
x+i^{k}y\right\Vert ^{2}i^{k}.
\end{equation*}
Now, we are ready to present our first result. The following theorem shows a generalization of inequality \eqref{yam-sheb}.
\begin{theorem}\label{main-man}
Let $A\in {\mathbb{B}}({\mathscr H})$ and $f,g$ be two non-negative
continuous functions on $[0,\infty )$ such that $f(x)g(x)=x\,\,(x\geq 0)$.
Then, for all non-negative non-decreasing convex function $h$ on $[0,\infty )$, we have%
\begin{equation*}
h\left( w(A)\right) \leq \frac{1}{4}\left\Vert h\left( g^{2}\left( \left\vert
A\right\vert \right) \right) +h\left( f^{2}\left( \left\vert A\right\vert
\right) \right) \right\Vert +\frac{1}{2}h\left( w\left( \tilde{A}_{f,g}\right)
\right) .
\end{equation*}
\end{theorem}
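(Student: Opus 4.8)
The plan is to first establish the special case of the inequality for the identity function, namely
\[
w(A)\le\frac14\left\Vert g^2(|A|)+f^2(|A|)\right\Vert+\frac12\, w\!\left(\tilde A_{f,g}\right),
\]
and then to upgrade it to an arbitrary non-negative non-decreasing convex $h$ by two applications of convexity together with a single-vector Jensen inequality. The decisive idea for the base case is to introduce the $2\times2$ operator matrix
\[
M=\left[\begin{array}{cc} 0 & f(|A|)\\ U g(|A|) & 0\end{array}\right]
\]
on $\mathscr H\oplus\mathscr H$ and to compute its square. Since $f(|A|)$ and $g(|A|)$ commute and $f(|A|)g(|A|)=|A|$, one gets $M^2=\left[\begin{array}{cc}\tilde A_{f,g} & 0\\ 0 & A\end{array}\right]$, because $f(|A|)Ug(|A|)=\tilde A_{f,g}$ and $Ug(|A|)f(|A|)=U|A|=A$. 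Hence $w(A)\le\max\{w(\tilde A_{f,g}),w(A)\}=w(M^2)\le w(M)^2$, the last step being the power inequality for the numerical radius.

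First I would compute $w(M)$ by Lemma \ref{1}(a). For every $\phi$ the operator $\mathrm{Re}(e^{i\phi}M)$ is the self-adjoint off-diagonal matrix with upper-right block $\frac12(e^{i\phi}f(|A|)+e^{-i\phi}g(|A|)U^*)$, whose norm equals the norm of that block; thus $w(M)=\frac12\max_{\psi}\Vert f(|A|)+e^{i\psi}g(|A|)U^*\Vert$. The crux is then a uniform estimate of this block norm. Expanding the positive operator $(f(|A|)+e^{i\psi}g(|A|)U^*)(f(|A|)+e^{i\psi}g(|A|)U^*)^*$ gives
\[
\left\Vert f(|A|)+e^{i\psi}g(|A|)U^*\right\Vert^2=\left\Vert f^2(|A|)+g(|A|)U^*Ug(|A|)+2\,\mathrm{Re}\!\left(e^{-i\psi}\tilde A_{f,g}\right)\right\Vert ,
\]
since the cross terms are exactly $e^{-i\psi}f(|A|)Ug(|A|)+e^{i\psi}g(|A|)U^*f(|A|)=2\,\mathrm{Re}(e^{-i\psi}\tilde A_{f,g})$. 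Using $U^*U\le I$ (so $g(|A|)U^*Ug(|A|)\le g^2(|A|)$ and the first summand is dominated in norm by $\Vert f^2(|A|)+g^2(|A|)\Vert$), the triangle inequality, and $\Vert\mathrm{Re}(e^{-i\psi}\tilde A_{f,g})\Vert\le w(\tilde A_{f,g})$ (Lemma \ref{1}(a) again), the right-hand side is at most $\Vert f^2(|A|)+g^2(|A|)\Vert+2w(\tilde A_{f,g})$ uniformly in $\psi$. Feeding this back through $w(A)\le w(M)^2=\frac14\max_\psi\Vert\,\cdot\,\Vert^2$ yields the base inequality. I expect locating the matrix $M$ with $M^2=\tilde A_{f,g}\oplus A$, and extracting $w(\tilde A_{f,g})$ rather than merely $\Vert\tilde A_{f,g}\Vert$ from the cross term, to be the main obstacle; note in particular that the naive ``fix a unit vector $x$'' form of the base inequality is false, so the matrix dilation is essential.

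Finally I would introduce $h$. Applying the non-decreasing $h$ to the base inequality and writing its right-hand side as the convex combination $\frac12\big(\frac12\Vert f^2(|A|)+g^2(|A|)\Vert\big)+\frac12\,w(\tilde A_{f,g})$, convexity of $h$ gives $h(w(A))\le\frac12 h\big(\frac12\Vert f^2(|A|)+g^2(|A|)\Vert\big)+\frac12 h(w(\tilde A_{f,g}))$. It then remains to prove $h\big(\frac12\Vert f^2(|A|)+g^2(|A|)\Vert\big)\le\frac12\Vert h(f^2(|A|))+h(g^2(|A|))\Vert$. For this I would write $\frac12\Vert f^2(|A|)+g^2(|A|)\Vert=\sup_{\Vert x\Vert=1}\frac12(\Vert f(|A|)x\Vert^2+\Vert g(|A|)x\Vert^2)$, pull $h$ inside the supremum by monotonicity, and for each unit $x$ apply convexity of $h$ followed by the single-vector Jensen inequality $h(\langle Tx,x\rangle)\le\langle h(T)x,x\rangle$ to the positive operators $T=f^2(|A|)$ and $T=g^2(|A|)$; taking the supremum back recovers $\frac12\Vert h(f^2(|A|))+h(g^2(|A|))\Vert$. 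Combining the two displays produces the stated inequality.
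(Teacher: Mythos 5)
Your proof is correct, but it reaches the crucial base inequality $w(A)\le\frac{1}{4}\left\Vert f^{2}(|A|)+g^{2}(|A|)\right\Vert+\frac{1}{2}w\big(\tilde{A}_{f,g}\big)$ by a genuinely different route. The paper stays at the level of a fixed unit vector: it writes $\mathrm{Re}\langle e^{i\theta}Ax,x\rangle=\mathrm{Re}\langle e^{i\theta}f(|A|)x,\,g(|A|)U^{*}x\rangle$, applies the polarization identity, drops the negative square, and only then passes to the operator norm of $\big(e^{i\theta}f(|A|)+g(|A|)U^{*}\big)\big(e^{-i\theta}f(|A|)+Ug(|A|)\big)$ --- exactly the product your $BB^{*}$ expansion produces --- finishing with Lemma \ref{1}(a). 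Your dilation $M$ with $M^{2}=\tilde{A}_{f,g}\oplus A$, combined with the power inequality $w(M^{2})\le w(M)^{2}$ and the block-norm identity $w(M)=\frac{1}{2}\max_{\psi}\big\Vert f(|A|)+e^{i\psi}g(|A|)U^{*}\big\Vert$, replaces the polarization step; after that the two arguments merge on the same cross-term computation. What each buys: the paper's route is more elementary and self-contained (no dilation, no Berger power inequality), while yours makes structurally visible why $A$ and $\tilde{A}_{f,g}$ appear together, as the two diagonal blocks of $M^{2}$ --- though your parenthetical claim that the dilation is \emph{essential} is not right, since the paper's vector-level polarization argument succeeds by bounding $\Vert(\,\cdot\,)x\Vert$ by the operator norm before expanding. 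Two further points of comparison: (i) your use of $U^{*}U\le I$ to get $g(|A|)U^{*}Ug(|A|)\le g^{2}(|A|)$ is actually more careful than the paper, which asserts the equality $g(|A|)U^{*}Ug(|A|)=g^{2}(|A|)$ (this needs $U^{*}Ug(|A|)=g(|A|)$, automatic when $g(0)=0$ but not in general; the inequality direction is all that is used); (ii) for the passage to $h$, your single-vector Jensen inequality $h(\langle Tx,x\rangle)\le\langle h(T)x,x\rangle$ is valid but heavier than necessary: since $f^{2}(|A|)$ and $g^{2}(|A|)$ are commuting functions of $|A|$, the paper simply uses $h(\Vert X\Vert)=\Vert h(X)\Vert$ for positive $X$ (with $h$ non-negative and non-decreasing) together with the pointwise scalar convexity of $h$ transferred through the functional calculus of $|A|$, which would shorten your ending.
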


\begin{proof}
Let $x$ be any unit vector. Then%
\begin{eqnarray*}
\textrm{Re}\left\langle e^{i\theta }Ax,x\right\rangle &=&\textrm{Re}\left\langle
e^{i\theta }U\left\vert A\right\vert x,x\right\rangle \\
&=&\textrm{Re}\left\langle e^{i\theta }Ug\left( \left\vert A\right\vert
\right) f\left( \left\vert A\right\vert \right) x,x\right\rangle \\
&=&\textrm{Re}\left\langle e^{i\theta }f\left( \left\vert A\right\vert \right)
x,g\left( \left\vert A\right\vert \right) U^{\ast }x\right\rangle \\
&=&\frac{1}{4}\left\Vert \left( e^{i\theta }f\left( \left\vert A\right\vert
\right) +g\left( \left\vert A\right\vert \right) U^{\ast }\right) x\right\Vert
^{2}-\frac{1}{4}\left\Vert \left( e^{i\theta }f\left( \left\vert A\right\vert
\right) -g\left( \left\vert A\right\vert \right) U^{\ast }\right) x\right\Vert ^{2}
\\
&&\qquad \qquad \qquad \qquad \qquad \qquad \qquad \qquad \text{\ (by
polarization identity)} \\
&\leq &\frac{1}{4}\left\Vert \left( e^{i\theta }f\left( \left\vert A\right\vert
\right) +g\left( \left\vert A\right\vert \right) U^{\ast }\right) x\right\Vert ^{2}
\\
&\leq &\frac{1}{4}\left\Vert \left( e^{i\theta }f\left( \left\vert A\right\vert
\right) +g\left( \left\vert A\right\vert \right) U^{\ast }\right) \right\Vert ^{2}
\\
&=&\frac{1}{4}\left\Vert \left( e^{i\theta }f\left( \left\vert A\right\vert
\right) +g\left( \left\vert A\right\vert \right) U^{\ast }\right) \left(
e^{-i\theta }f\left( \left\vert A\right\vert \right) +Ug\left( \left\vert
A\right\vert \right) \right) \right\Vert \\
&=&\frac{1}{4}\left\Vert g^{2}\left( \left\vert A\right\vert \right) +f^{2}\left(
\left\vert A\right\vert \right) +e^{i\theta }\tilde{A}_{f,g}+e^{-i\theta
}\left( \tilde{A}_{f,g}\right) ^{\ast }\right\Vert \\
&\leq &\frac{1}{4}\left\Vert g^{2}\left( \left\vert A\right\vert \right)
+f^{2}\left( \left\vert A\right\vert \right) \right\Vert +\frac{1}{4}\left\Vert
e^{i\theta }\tilde{A}_{f,g}+e^{-i\theta }\left( \tilde{A}_{f,g}\right)
^{\ast }\right\Vert \\
&=&\frac{1}{4}\left\Vert g^{2}\left( \left\vert A\right\vert \right) +f^{2}\left(
\left\vert A\right\vert \right) \right\Vert +\frac{1}{2}\left\Vert \textrm{Re}\left(
e^{i\theta }\tilde{A}_{f,g}\right) \right\Vert \\
&\leq &\frac{1}{4}\left\Vert g^{2}\left( \left\vert A\right\vert \right)
+f^{2}\left( \left\vert A\right\vert \right) \right\Vert +\frac{1}{2}w\left(
\tilde{A}_{f,g}\right) .
\end{eqnarray*}%
Now,  taking the supremum over all unit vector  $x\in{\mathscr H}$ and applying Lemma \ref{1} in the above inequality produces
\begin{eqnarray*}
w\left( A\right)
\leq \frac{1}{4}\left\Vert g^{2}\left( \left\vert A\right\vert \right)
+f^{2}\left( \left\vert A\right\vert \right) \right\Vert +\frac{1}{2}w\left(
\tilde{A}_{f,g}\right) .
\end{eqnarray*}%
Therefore,%
\begin{eqnarray*}
h\left( w\left( A\right) \right) &\leq &h\left( \frac{1}{4}\left\Vert g^{2}\left(
\left\vert A\right\vert \right) +f^{2}\left( \left\vert A\right\vert \right)
\right\Vert +\frac{1}{2}w\left( \tilde{A}_{f,g}\right) \right) \\
&=&h\left( \frac{1}{2}\left\Vert \frac{g^{2}\left( \left\vert A\right\vert
\right) +f^{2}\left( \left\vert A\right\vert \right) }{2}\right\Vert +\frac{1}{2}%
w\left( \tilde{A}_{f,g}\right) \right) \\
&\leq &\frac{1}{2}h\left( \left\Vert \frac{g^{2}\left( \left\vert A\right\vert
\right) +f^{2}\left( \left\vert A\right\vert \right) }{2}\right\Vert \right) +%
\frac{1}{2}h\left( w\left( \tilde{A}_{f,g}\right) \right)  \\&&\qquad\qquad\qquad\qquad\qquad\qquad(\textrm {by the convexity of }\, h )\\
&=&\frac{1}{2}\left\Vert h\left( \frac{g^{2}\left( \left\vert A\right\vert
\right) +f^{2}\left( \left\vert A\right\vert \right) }{2}\right) \right\Vert +%
\frac{1}{2}h\left( w\left( \tilde{A}_{f,g}\right) \right) \\&&\qquad\qquad\qquad\qquad\qquad\qquad(\textrm {by the functional calculus} )\\
&\leq &\frac{1}{4}\left\Vert h\left( g^{2}\left( \left\vert A\right\vert \right)
\right) +h\left( f^{2}\left( \left\vert A\right\vert \right) \right) \right\Vert +%
\frac{1}{2}h\left( w\left( \tilde{A}_{f,g}\right) \right)
\\&&\qquad\qquad\qquad\qquad\qquad\qquad(\textrm {by the convexity of }\, h ).
\end{eqnarray*}
\end{proof}
Theorem \ref{main-man} includes some special cases as follows.
\begin{corollary}
Let $A\in {\mathbb{B}}({\mathscr H})$. Then, for all non-negative non-decreasing convex
function $h$ on $[0,\infty )$ and all $t\in \left[ 0,1\right] $, we have%
\begin{equation}\label{soso}
h\left( w(A)\right) \leq \frac{1}{4}\left\Vert h\left( \left\vert A\right\vert
^{2t}\right) +h\left( \left\vert A\right\vert ^{2\left( 1-t\right) }\right)
\right\Vert +\frac{1}{2}h\left( w\left( \tilde{A}_{t}\right) \right) .
\end{equation}
\end{corollary}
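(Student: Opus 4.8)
The plan is to obtain this corollary as a direct specialization of Theorem \ref{main-man} by choosing the pair of functions appropriately. First I would fix $t\in[0,1]$ and set $f(x)=x^{t}$ and $g(x)=x^{1-t}$ on $[0,\infty)$. Since $t\geq 0$ and $1-t\geq 0$, both $f$ and $g$ are non-negative continuous functions on $[0,\infty)$, so they are admissible in the hypothesis of Theorem \ref{main-man}. The only condition to check is the multiplicative identity, and indeed $f(x)g(x)=x^{t}\cdot x^{1-t}=x^{t+(1-t)}=x$ for every $x\geq 0$.

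Next I would identify the three ingredients appearing in the conclusion of Theorem \ref{main-man} under this choice. Using the functional calculus, $f^{2}(|A|)=|A|^{2t}$ and $g^{2}(|A|)=|A|^{2(1-t)}$, so that $h\!\left(f^{2}(|A|)\right)=h\!\left(|A|^{2t}\right)$ and $h\!\left(g^{2}(|A|)\right)=h\!\left(|A|^{2(1-t)}\right)$. For the generalized Aluthge transform, the definition $\tilde{A}_{f,g}=f(|A|)Ug(|A|)$ becomes $\tilde{A}_{f,g}=|A|^{t}U|A|^{1-t}=\tilde{A}_{t}$, recovering exactly the $t$-Aluthge transform introduced by Okubo.

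Finally, I would substitute these identities into the inequality of Theorem \ref{main-man}, namely
\begin{equation*}
h\left(w(A)\right)\leq\frac{1}{4}\left\Vert h\!\left(g^{2}(|A|)\right)+h\!\left(f^{2}(|A|)\right)\right\Vert+\frac{1}{2}h\!\left(w\!\left(\tilde{A}_{f,g}\right)\right),
\end{equation*}
which yields precisely inequality \eqref{soso} after renaming the terms. There is no real obstacle in this argument, since it is a pure specialization: the only point requiring verification is the multiplicative condition $f(x)g(x)=x$, and that is immediate from the laws of exponents. The convexity and monotonicity of $h$ are inherited unchanged from the hypotheses of the theorem.
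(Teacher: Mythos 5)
Your proposal is correct and matches the paper's approach exactly: the corollary is stated there as an immediate specialization of Theorem \ref{main-man} with $f(x)=x^{t}$ and $g(x)=x^{1-t}$, so that $f^{2}(|A|)=|A|^{2t}$, $g^{2}(|A|)=|A|^{2(1-t)}$ and $\tilde{A}_{f,g}=\tilde{A}_{t}$. Your verification of the condition $f(x)g(x)=x$ and the identification of the terms via the functional calculus are precisely what is needed.
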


\begin{corollary}\label{vaw}
Let $A\in {\mathbb{B}}({\mathscr H})$. Then, for all $t\in \left[ 0,1\right]
$ and $r\geq 1$, we have%
\begin{equation*}
w^{r}(A)\leq \frac{1}{4}\Vert \left\vert A\right\vert ^{2tr}+\left\vert
A\right\vert ^{2\left( 1-t\right) r}\Vert +\frac{1}{2}w^{r}\left( \tilde{A}%
_{t}\right) .
\end{equation*}%
In particular,%
\begin{equation*}
w^{r}(A)\leq \frac{1}{2}\left( \Vert A\Vert ^{r}+w^{r}\left( \tilde{A}%
\right) \right) .
\end{equation*}
\end{corollary}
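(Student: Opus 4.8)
The plan is to obtain Corollary~\ref{vaw} as an immediate specialization of inequality \eqref{soso}, choosing the power function as the convex weight $h$. First I would set $h(s)=s^{r}$ for $s\in[0,\infty)$. Because $r\geq 1$, this $h$ is non-negative, non-decreasing and convex on $[0,\infty)$, so it is an admissible choice in \eqref{soso}; note that this is precisely where the hypothesis $r\geq 1$ is used, since convexity of $s\mapsto s^{r}$ on $[0,\infty)$ fails for $0<r<1$.

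Next I would evaluate the three occurrences of $h$ in \eqref{soso}. Since $\left\vert A\right\vert$ is a positive operator, the continuous functional calculus gives $h\!\left(\left\vert A\right\vert^{2t}\right)=\left(\left\vert A\right\vert^{2t}\right)^{r}=\left\vert A\right\vert^{2tr}$ and likewise $h\!\left(\left\vert A\right\vert^{2(1-t)}\right)=\left\vert A\right\vert^{2(1-t)r}$, while $h\!\left(w(\tilde{A}_{t})\right)=w^{r}(\tilde{A}_{t})$ because $w(\tilde{A}_{t})$ is a non-negative scalar. Substituting these into \eqref{soso} yields
\begin{equation*}
w^{r}(A)\leq \frac{1}{4}\left\Vert \left\vert A\right\vert^{2tr}+\left\vert A\right\vert^{2(1-t)r}\right\Vert+\frac{1}{2}w^{r}\!\left(\tilde{A}_{t}\right),
\end{equation*}
which is the first asserted inequality.

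For the ``in particular'' statement I would specialize to $t=\tfrac{1}{2}$. Then $\tilde{A}_{1/2}=\tilde{A}$ is the usual Aluthge transform, and the two power terms coincide, giving $\left\vert A\right\vert^{2tr}+\left\vert A\right\vert^{2(1-t)r}=2\left\vert A\right\vert^{r}$. Hence $\left\Vert \left\vert A\right\vert^{2tr}+\left\vert A\right\vert^{2(1-t)r}\right\Vert=2\left\Vert\left\vert A\right\vert^{r}\right\Vert=2\left\Vert\left\vert A\right\vert\right\Vert^{r}=2\Vert A\Vert^{r}$, where I use that for a positive operator $\left\Vert\left\vert A\right\vert^{r}\right\Vert=\left\Vert\left\vert A\right\vert\right\Vert^{r}$ (by functional calculus) together with $\left\Vert\left\vert A\right\vert\right\Vert=\Vert A\Vert$. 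The factor $\tfrac14\cdot 2=\tfrac12$ then produces $w^{r}(A)\leq \tfrac{1}{2}\left(\Vert A\Vert^{r}+w^{r}(\tilde{A})\right)$, as claimed. There is no genuine obstacle here: the argument is a direct substitution, and the only points requiring care are confirming that $r\geq 1$ is exactly what guarantees convexity of $s\mapsto s^{r}$ and that the functional-calculus simplifications $\left(\left\vert A\right\vert^{\alpha}\right)^{r}=\left\vert A\right\vert^{\alpha r}$ and $\left\Vert\left\vert A\right\vert^{r}\right\Vert=\Vert A\Vert^{r}$ are legitimate for the positive operator $\left\vert A\right\vert$.
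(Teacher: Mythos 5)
Your proposal is correct and follows exactly the paper's own route: the authors likewise obtain the first inequality by taking $h(x)=x^{r}$ (with $r\geq 1$ ensuring convexity) in inequality \eqref{soso}, and deduce the particular case by setting $t=\tfrac{1}{2}$. Your write-up merely makes explicit the functional-calculus simplifications $\bigl(\vert A\vert^{\alpha}\bigr)^{r}=\vert A\vert^{\alpha r}$ and $\bigl\Vert \vert A\vert^{r}\bigr\Vert=\Vert A\Vert^{r}$ that the paper leaves implicit, and these are all justified.
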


\begin{proof}
The first inequality  follows from
inequality \eqref{soso} for the function  $h\left( x\right) =x^{r}\,\,(r\geq1)$. For the particular case, it is enough to put $t=\frac{1}{2}.$
\end{proof}
Theorem \ref{main-man} gives the next result for the off-diagonal operator matrix $\left[
\begin{array}{cc}
0 & A \\
B & 0%
\end{array}%
\right] $.
\begin{theorem}\label{man-kal}
Let $A, B\in {\mathbb B}({\mathscr H})$, $f,g$ be two non-negative
continuous functions on $[0,\infty )$ such that $f(x)g(x)=x\,\,(x\geq 0)$ and $r\geq1$. Then
\begin{eqnarray*}
w^{r}\left( \left[
\begin{array}{cc}
0 & A \\
B & 0%
\end{array}%
\right] \right) &\leq& \frac{1}{4}\max \left( \left\Vert  g^{2r}\left( \left\vert
A\right\vert  \right) + f^{2r}\left( \left\vert A\right\vert
\right)  \right\Vert ,\left\Vert  g^{2r}\left( \left\vert
B\right\vert  \right) + f^{2r}\left( \left\vert B\right\vert
\right)  \right\Vert\right) \\&&+\frac{1}{4}\big(\Vert f(|B|)g(|A^\ast|)\Vert^r+\Vert f(|A|)g(|B^\ast|)\Vert^r\big).
\end{eqnarray*}
\end{theorem}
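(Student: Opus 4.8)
The plan is to apply Theorem \ref{main-man} (with the convex function $h(x)=x^{r}$) to the off-diagonal operator matrix $T=\left[\begin{smallmatrix} 0 & A \\ B & 0\end{smallmatrix}\right]$ and then to unpack each term on the right-hand side through the block structure. First I would assemble the ingredients of the generalized Aluthge transform of $T$. Writing the polar decompositions $A=U_{A}|A|$ and $B=U_{B}|B|$, a direct computation gives $T^{*}T=\left[\begin{smallmatrix} |B|^{2} & 0 \\ 0 & |A|^{2}\end{smallmatrix}\right]$, so that $|T|=\left[\begin{smallmatrix} |B| & 0 \\ 0 & |A|\end{smallmatrix}\right]$ and the partial isometry in the polar decomposition of $T$ is $V=\left[\begin{smallmatrix} 0 & U_{A} \\ U_{B} & 0\end{smallmatrix}\right]$ (one checks $V|T|=T$ and $\ker V=\ker|T|$). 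Consequently $f(|T|)$ and $g(|T|)$ are the diagonal matrices built from $f,g$ evaluated at $|B|$ and $|A|$, and multiplying out yields
\[
\tilde{T}_{f,g}=f(|T|)\,V\,g(|T|)=\left[\begin{array}{cc} 0 & f(|B|)\,U_{A}\,g(|A|) \\ f(|A|)\,U_{B}\,g(|B|) & 0 \end{array}\right].
\]

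Next I would read off the first term of Theorem \ref{main-man}. Since $g^{2r}(|T|)+f^{2r}(|T|)$ is the diagonal block matrix with entries $g^{2r}(|B|)+f^{2r}(|B|)$ and $g^{2r}(|A|)+f^{2r}(|A|)$, its norm equals the maximum of the norms of the two diagonal blocks, giving exactly the $\max$ term in the statement. For the second term I need an upper bound for $w(\tilde{T}_{f,g})$. Here I would use that for an off-diagonal matrix, Lemma \ref{1}(a) gives $w\left(\left[\begin{smallmatrix} 0 & X \\ Y & 0 \end{smallmatrix}\right]\right)=\frac{1}{2}\max_{\theta}\left\| e^{i\theta}X+e^{-i\theta}Y^{*}\right\|\leq\frac{1}{2}\bigl(\|X\|+\|Y\|\bigr)$, because $\mathrm{Re}(e^{i\theta}S)$ is self-adjoint and off-diagonal with upper-right block $\frac{1}{2}(e^{i\theta}X+e^{-i\theta}Y^{*})$, and the norm of such a matrix equals the norm of that block.

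The key algebraic step is to rewrite the blocks of $\tilde{T}_{f,g}$ in the form appearing in the statement. I would invoke the intertwining identity $U_{A}\,g(|A|)=g(|A^{*}|)\,U_{A}$, which follows from $U_{A}|A|=|A^{*}|U_{A}$, hence $U_{A}|A|^{n}=|A^{*}|^{n}U_{A}$ for all $n$, and then extends to every continuous $g$ by polynomial approximation (after subtracting $g(0)$). This gives $f(|B|)\,U_{A}\,g(|A|)=f(|B|)\,g(|A^{*}|)\,U_{A}$, and since $\|U_{A}\|\le 1$, its norm is at most $\|f(|B|)g(|A^{*}|)\|$; symmetrically the other block has norm at most $\|f(|A|)g(|B^{*}|)\|$. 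Combining with the off-diagonal bound yields $w(\tilde{T}_{f,g})\le\frac{1}{2}\big(\|f(|B|)g(|A^{*}|)\|+\|f(|A|)g(|B^{*}|)\|\big)$. Finally, raising to the $r$-th power and using the convexity inequality $\left(\frac{a+b}{2}\right)^{r}\le\frac{a^{r}+b^{r}}{2}$ for $r\ge 1$ converts $\frac{1}{2}w^{r}(\tilde{T}_{f,g})$ into the claimed $\frac{1}{4}\big(\|f(|B|)g(|A^{*}|)\|^{r}+\|f(|A|)g(|B^{*}|)\|^{r}\big)$, completing the proof.

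The main obstacle I anticipate is the correct identification of $\tilde{T}_{f,g}$ together with the use of the $|A|$--$|A^{*}|$ intertwining relation: one must be careful that the transform is defined through the partial isometry of the polar decomposition of $T$ (not of $A$ and $B$ separately) and that the relation $U g(|A|)=g(|A^{*}|)U$ remains valid for a general continuous $g$ even when $U$ is only a partial isometry. Everything else is routine block-matrix computation and convexity.
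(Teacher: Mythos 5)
Your proof is correct, and its skeleton is the same as the paper's: apply Theorem \ref{main-man} with $h(x)=x^{r}$ to $T=\left[\begin{smallmatrix}0&A\\B&0\end{smallmatrix}\right]$, compute $|T|=\left[\begin{smallmatrix}|B|&0\\0&|A|\end{smallmatrix}\right]$ and $\tilde{T}_{f,g}$ blockwise, bound $w\left(\tilde{T}_{f,g}\right)\leq\frac{1}{2}\left(\Vert f(|B|)g(|A^{\ast}|)\Vert+\Vert f(|A|)g(|B^{\ast}|)\Vert\right)$, and finish with convexity of $x^{r}$. Two intermediate steps differ, and both are worth noting. First, to bound the numerical radius of the off-diagonal $\tilde{T}_{f,g}$ you use Lemma \ref{1}(a) together with the identity $\left\Vert \textrm{Re}\left(e^{i\theta}S\right)\right\Vert=\frac{1}{2}\left\Vert e^{i\theta}X+e^{-i\theta}Y^{\ast}\right\Vert$ for $S=\left[\begin{smallmatrix}0&X\\Y&0\end{smallmatrix}\right]$, whereas the paper splits $\tilde{T}_{f,g}$ into its two nilpotent corners, conjugates the lower one by the flip unitary $\left[\begin{smallmatrix}0&I\\I&0\end{smallmatrix}\right]$, and applies Lemma \ref{1}(b); the bounds and effort are the same. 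Second, and more substantively: you transport $g$ across the partial isometry via the one-sided intertwining $U_{A}\,g(|A|)=g(|A^{\ast}|)\,U_{A}$, correctly extended from $U_{A}|A|^{n}=|A^{\ast}|^{n}U_{A}$ by polynomial approximation after subtracting $g(0)$, and then discard $\Vert U_{A}\Vert\leq1$. The paper instead asserts the two-sided identity $g(|A|)=U^{\ast}g(|A^{\ast}|)U$ ``for every non-negative continuous function $g$,'' which is false when $g(0)\neq0$ and $\ker A\neq\{0\}$: from the correct intertwining one gets $U^{\ast}g(|A^{\ast}|)U=g(|A|)-g(0)\left(I-U^{\ast}U\right)$, and the constraint $f(x)g(x)=x$ permits $g(0)\neq0$ (e.g.\ $f(x)=x$, $g\equiv1$). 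The paper's conclusion survives because the defect enters only inside a norm that is then bounded using $\Vert U\Vert\leq1$, but your formulation is the clean one --- the ``obstacle'' you flagged about partial isometries is exactly the point where the paper's intermediate claim needs repair, and your treatment handles it correctly.
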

\begin{proof}
Let $A=U|A|$ and $B=V|B|$ be the polar decompositions of $A$ and $B$, respectively and let $T=\left[
\begin{array}{cc}
0 & A \\
B & 0%
\end{array}%
\right]$. It follows from the polar the composition of $T=\left[
\begin{array}{cc}
0 & U \\
V & 0%
\end{array}%
\right]\left[
\begin{array}{cc}
|B| & 0 \\
0 & |A|%
\end{array}%
\right]$ that
\begin{eqnarray*}
\tilde{T}_{f,g}&=&f(|T|)\left[
\begin{array}{cc}
0 & U \\
V & 0%
\end{array}%
\right]g(|T|)\\&=&\left[
\begin{array}{cc}
f(|B|) & 0 \\
0 & f(|A|)%
\end{array}%
\right]\left[
\begin{array}{cc}
0 & U \\
V & 0%
\end{array}%
\right]\left[
\begin{array}{cc}
g(|B|) & 0 \\
0 & g(|A|)%
\end{array}%
\right]\\&
=&\left[
\begin{array}{cc}
0 & f(|B|)Ug(|A|) \\
f(|A|)Vg(|B|) & 0%
\end{array}%
\right].
\end{eqnarray*}
Using $|A^\ast|^2=AA^\ast=U|A|^2U^\ast$ and $|B^\ast|^2=BB^\ast=V|B|^2V^\ast$ we have $g(|A|)=U^\ast g(|A^\ast|)U$ and $g(|B|)=V^\ast g(|B^\ast|)V$ for every non-negative continuous function $g$ on $[0,\infty)$.
Therefore,
\begin{eqnarray}\label{do}
w\left(\tilde{T}_{f,g}\right)\nonumber
&=&w\left(\left[
\begin{array}{cc}
0 & f(|B|)Ug(|A|) \\
f(|A|)Vg(|B|) & 0%
\end{array}%
\right]\right)\nonumber\\&\leq& w\left(\left[
\begin{array}{cc}
0 & f(|B|)Ug(|A|) \\
0 & 0%
\end{array}%
\right]\right)+w\left(\left[
\begin{array}{cc}
0 & 0 \\
f(|A|)Vg(|B|) & 0%
\end{array}%
\right]\right)\nonumber\\&=& w\left(\left[
\begin{array}{cc}
0 & f(|B|)Ug(|A|) \\
0 & 0%
\end{array}%
\right]\right)+w\left(U^*\left[
\begin{array}{cc}
0 & f(|A|)Vg(|B|) \\
0 & 0%
\end{array}%
\right]U\right)\nonumber\\&=& w\left(\left[
\begin{array}{cc}
0 & f(|B|)Ug(|A|) \\
0 & 0%
\end{array}%
\right]\right)+w\left(\left[
\begin{array}{cc}
0 & f(|A|)Vg(|B|) \\
0 & 0%
\end{array}%
\right]\right)\nonumber\\
&= &\frac{1}{2}\Vert f(|B|)U g(|A|)\Vert+\frac{1}{2}\Vert f(|A|)V g(|B|)\Vert\nonumber\\&&
\qquad\qquad\qquad\qquad\qquad\textrm{(by Lemma \ref{kit1}(b))}\nonumber\\
&=&\frac{1}{2}\Vert f(|B|)U U^\ast g(|A^\ast|)U\Vert+\frac{1}{2}\Vert f(|A|)V V^\ast g(|B^\ast|)V\Vert\nonumber\\
&\leq&\frac{1}{2}\Vert f(|B|)g(|A^\ast|)\Vert+\frac{1}{2}\Vert f(|A|)g(|B^\ast|)\Vert,
\end{eqnarray}
where  $U=\left[
\begin{array}{cc}
0 & I \\
I & 0%
\end{array}%
\right]$ is unitary. Applying Theorem \ref{main-man} and inequality \eqref{do}, we have
\begin{eqnarray*}
w^{r}\left(T \right) &\leq& \frac{1}{4}\left\Vert  g^{2r}\left( \left\vert
T\right\vert  \right) + f^{2r}\left( \left\vert T\right\vert
\right)  \right\Vert +\frac{1}{2}\left( w^r\left( \tilde{T}_{f,g}\right)
\right) \\&\leq& \frac{1}{4}\max \left( \left\Vert  g^{2r}\left( \left\vert
A\right\vert  \right) + f^{2r}\left( \left\vert A\right\vert
\right)  \right\Vert ,\left\Vert  g^{2r}\left( \left\vert
B\right\vert  \right) + f^{2r}\left( \left\vert B\right\vert
\right)  \right\Vert\right) \\&&+\frac{1}{2}\left[\frac{1}{2}\left(\Vert f(|B|)g(|A^\ast|)\Vert+\Vert f(|A|)g(|B^\ast|)\Vert\right)\right]
^{r}\\&\leq& \frac{1}{4}\max \left( \left\Vert  g^{2r}\left( \left\vert
A\right\vert  \right) + f^{2r}\left( \left\vert A\right\vert
\right)  \right\Vert ,\left\Vert  g^{2r}\left( \left\vert
B\right\vert  \right) + f^{2r}\left( \left\vert B\right\vert
\right)  \right\Vert\right) \\&&+\frac{1}{4}\Vert f(|B|)g(|A^\ast|)\Vert^r+\frac{1}{4}\Vert f(|A|)g(|B^\ast|)\Vert^r\\&&\qquad\qquad
(\textrm{by the convexity}\,h(x)=x^r).
\end{eqnarray*}
\end{proof}
\begin{corollary}
\label{c1}Let $A, B\in {\mathbb B}({\mathscr H})$. Then,
for all $t\in \left[ 0,1\right] $ and $r\geq 1$, we have%
\begin{eqnarray*}
w^{\frac{r}{2}}\left( AB\right) &\leq& \frac{1}{4}\max \left( \left\Vert
\left\vert A\right\vert ^{2tr}+\left\vert A\right\vert ^{2\left( 1-t\right)
r}\right\Vert ,\left\Vert \left\vert B\right\vert ^{2tr}+\left\vert
B\right\vert ^{2\left( 1-t\right) r}\right\Vert \right) \\&&+\frac{1}{4}\left(
\left\Vert |A|^{t}\left\vert B^{\ast }\right\vert ^{1-t}\right\Vert^{r}
+\left\Vert \left\vert B\right\vert ^{t}|A^{\ast }|^{1-t}\right\Vert^{r} \right)
.
\end{eqnarray*}
\end{corollary}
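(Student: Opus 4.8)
The plan is to specialize Theorem~\ref{man-kal} to the power functions $f(x)=x^{t}$ and $g(x)=x^{1-t}$, which satisfy $f(x)g(x)=x$ for every $t\in[0,1]$, and then to transfer the resulting bound on the off-diagonal matrix $T=\left[\begin{array}{cc} 0 & A \\ B & 0 \end{array}\right]$ into a bound on $w(AB)$ by means of the power inequality for the numerical radius. With this choice of $f$ and $g$ one has $g^{2r}(|A|)=|A|^{2(1-t)r}$, $f^{2r}(|A|)=|A|^{2tr}$, and similarly for $B$, while $f(|B|)g(|A^{\ast}|)=|B|^{t}|A^{\ast}|^{1-t}$ and $f(|A|)g(|B^{\ast}|)=|A|^{t}|B^{\ast}|^{1-t}$. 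Hence the right-hand side of Theorem~\ref{man-kal} collapses to precisely the right-hand side claimed in the corollary.

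The key step is to bound $w(AB)$ by $w^{2}(T)$. First I would compute
\begin{equation*}
T^{2}=\left[\begin{array}{cc} 0 & A \\ B & 0 \end{array}\right]^{2}=\left[\begin{array}{cc} AB & 0 \\ 0 & BA \end{array}\right].
\end{equation*}
Since the numerical radius of an orthogonal direct sum equals the maximum of the numerical radii of the two summands, we obtain $w(T^{2})=\max\{w(AB),w(BA)\}\geq w(AB)$. Combining this with the power inequality $w(T^{2})\leq w^{2}(T)$, recalled in the introduction, yields $w(AB)\leq w^{2}(T)$.

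Finally, raising both sides to the power $r/2$ (permissible because $r\geq 1$ and both quantities are non-negative, so $x\mapsto x^{r/2}$ is increasing) gives $w^{\frac{r}{2}}(AB)\leq w^{r}(T)$. Applying Theorem~\ref{man-kal} to $T$ with $f(x)=x^{t}$ and $g(x)=x^{1-t}$ then produces exactly the asserted inequality. I do not expect a genuine obstacle in this argument; the only points deserving care are the direct-sum identity $w(X\oplus Y)=\max\{w(X),w(Y)\}$ applied to the block-diagonal matrix $T^{2}$, and the monotonicity used to pass from $w(AB)\leq w^{2}(T)$ to the final power $\tfrac{r}{2}$.
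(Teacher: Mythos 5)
Your proposal is correct and coincides with the paper's own argument essentially step for step: the paper likewise writes $w^{\frac{r}{2}}(AB)\leq\max\bigl(w^{\frac{r}{2}}(AB),w^{\frac{r}{2}}(BA)\bigr)=w^{\frac{r}{2}}\bigl(T^{2}\bigr)\leq w^{r}(T)$ using the block identity $T^{2}=\left[\begin{array}{cc} AB & 0 \\ 0 & BA \end{array}\right]$, the direct-sum property of the numerical radius, and the power inequality, and then invokes Theorem~\ref{man-kal} with $f(x)=x^{t}$, $g(x)=x^{1-t}$. No gaps; your explicit verification of the specialization $f^{2r}(|A|)=|A|^{2tr}$, $g^{2r}(|A|)=|A|^{2(1-t)r}$ is exactly what the paper leaves implicit.
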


\begin{proof}
Applying the power inequality of the numerical radius, we have%
\begin{eqnarray*}
w^{\frac{r}{2}}\left( AB\right) &\leq &\max \left( w^{\frac{r}{2}}\left(
AB\right) ,w^{\frac{r}{2}}\left( BA\right) \right) \\
&=&w^{\frac{r}{2}}\left( \left[
\begin{array}{cc}
AB & 0 \\
0 & BA%
\end{array}%
\right] \right) \\
&=&w^{\frac{r}{2}}\left( \left[
\begin{array}{cc}
0 & A \\
B & 0%
\end{array}%
\right] ^{2}\right) \\
&\leq &w^{r}\left( \left[
\begin{array}{cc}
0 & A \\
B & 0%
\end{array}%
\right] \right) \\
&\leq &\frac{1}{4}\max \left( \left\Vert \left\vert A\right\vert
^{2tr}+\left\vert A\right\vert ^{2\left( 1-t\right) r}\right\Vert
,\left\Vert \left\vert B\right\vert ^{2tr}+\left\vert B\right\vert ^{2\left(
1-t\right) r}\right\Vert \right) \\&&+\frac{1}{4}\left( \left\Vert
|A|^{t}\left\vert B^{\ast }\right\vert ^{1-t}\right\Vert^{r} +\left\Vert
\left\vert B\right\vert ^{t}|A^{\ast }|^{1-t}\right\Vert^{r} \right)\\&&
\qquad\qquad\qquad\qquad\textrm{(by Theorem \ref{man-kal})}.
\end{eqnarray*}
\end{proof}

\begin{corollary}
Let $A, B\in {\mathbb B}({\mathscr H})$ be positive operators. Then,
for all $t\in \left[ 0,1\right] $ and $r\geq 1$, we have%
\begin{eqnarray*}
\left\Vert A^{\frac{1}{2}}B^{\frac{1}{2}}\right\Vert ^{r}&\leq &\frac{1}{4}%
\max \left( \left\Vert  A^{tr}+
A^{\left( 1-t\right) r}\right\Vert ,\left\Vert
B^{tr}+ B^{2\left( 1-t\right)
r}\right\Vert \right) \\&&+\frac{1}{4}\left( \left\Vert A^{t}
B ^{1-t}\right\Vert ^{r} +\left\Vert B
^{t}A^{1-t}\right\Vert ^{r} \right).
\end{eqnarray*}
\end{corollary}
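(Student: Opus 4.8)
The plan is to read this corollary as the positive‑operator specialization of Corollary \ref{c1}, so that almost all of the work is already done and the only genuine task is to relate its left‑hand side $\left\Vert A^{1/2}B^{1/2}\right\Vert ^{r}$ to the quantity $w^{r/2}(AB)$ that Corollary \ref{c1} bounds. Here I would first exploit positivity: since $A,B\ge 0$ we have $|A|=|A^{\ast }|=A$ and $|B|=|B^{\ast }|=B$, so the two terms on the right of Corollary \ref{c1} collapse to exactly the expressions displayed here, namely the maximum of $\left\Vert A^{2tr}+A^{2(1-t)r}\right\Vert $ and $\left\Vert B^{2tr}+B^{2(1-t)r}\right\Vert $, together with $\left\Vert A^{t}B^{1-t}\right\Vert ^{r}+\left\Vert B^{t}A^{1-t}\right\Vert ^{r}$. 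Thus the entire right‑hand side is already in place once the left‑hand side is handled.

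The crux is therefore the reduction $\left\Vert A^{1/2}B^{1/2}\right\Vert ^{r}\le w^{r/2}(AB)$, which I would establish in three short moves. By the $C^{\ast }$‑identity applied to $X=A^{1/2}B^{1/2}$, and using that $A^{1/2},B^{1/2}$ are self‑adjoint with $B^{1/2}B^{1/2}=B$, I get
\[
\left\Vert A^{1/2}B^{1/2}\right\Vert ^{2}=\left\Vert XX^{\ast }\right\Vert =\left\Vert A^{1/2}BA^{1/2}\right\Vert .
\]
Since $A^{1/2}BA^{1/2}$ is positive, its norm equals its spectral radius. Applying the fact that $XY$ and $YX$ share the same nonzero spectrum with $X=A^{1/2}$ and $Y=A^{1/2}B$ (so that $XY=AB$ and $YX=A^{1/2}BA^{1/2}$) gives $r(A^{1/2}BA^{1/2})=r(AB)$, and combining with $r(AB)\le w(AB)$, recorded in the introduction, yields
\[
\left\Vert A^{1/2}B^{1/2}\right\Vert ^{2}=r(AB)\le w(AB).
\]
Raising both nonnegative sides to the power $r/2$ produces $\left\Vert A^{1/2}B^{1/2}\right\Vert ^{r}\le w^{r/2}(AB)$, as required.

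Finally I would chain this with Corollary \ref{c1}: $\left\Vert A^{1/2}B^{1/2}\right\Vert ^{r}\le w^{r/2}(AB)$ followed by the bound on $w^{r/2}(AB)$ from Corollary \ref{c1}, simplified via positivity as above, gives the stated inequality. I expect the only delicate point to be the reduction step, and specifically keeping the direction of the inequality correct: one must use $r(AB)\le w(AB)$ (an upper bound on a spectral quantity by the numerical radius) rather than the reverse, and the passage from the squared norm to the $r$th power is legitimate precisely because both sides are nonnegative and $r\ge 1$. Everything else—rewriting $|A|^{2tr}=A^{2tr}$ and $\left\vert A\right\vert ^{t}|B^{\ast }|^{1-t}=A^{t}B^{1-t}$, etc.—is a routine substitution into an already‑proved estimate.
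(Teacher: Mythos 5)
Your proof is correct and is essentially the paper's own argument: the paper likewise establishes $\left\Vert A^{\frac{1}{2}}B^{\frac{1}{2}}\right\Vert ^{r}=r^{\frac{r}{2}}(AB)$ via the $C^{*}$-identity together with the commutativity property $r(XY)=r(YX)$, dominates the spectral radius by the numerical radius, and then invokes Corollary \ref{c1} with $|A|=|A^{\ast}|=A$ and $|B|=|B^{\ast}|=B$. The only remark worth making is that, exactly as your substitution shows, the direct application of Corollary \ref{c1} produces the exponents $2tr$ and $2(1-t)r$ in both entries of the maximum, so the exponents printed in the corollary's statement contain typographical slips that your argument implicitly corrects.
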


\begin{proof}
Since the spectral radius of any operator is dominated by its numerical
radius, then $r^{\frac{1}{2}}\left( AB\right) \leq w^{\frac{1}{2}}\left(
AB\right) .$ Applying a commutativity property of the spectral radius, we get%
\begin{eqnarray}
r^{\frac{r}{2}}\left( AB\right) &=&r^{\frac{r}{2}}\left( A^{\frac{1}{2}}A^{%
\frac{1}{2}}B^{\frac{1}{2}}B^{\frac{1}{2}}\right)  \notag \\
&=&r^{\frac{r}{2}}\left( A^{\frac{1}{2}}B^{\frac{1}{2}}B^{\frac{1}{2}}A^{%
\frac{1}{2}}\right)  \notag \\
&=&r^{\frac{r}{2}}\left( A^{\frac{1}{2}}B^{\frac{1}{2}}\left( A^{\frac{1}{2}%
}B^{\frac{1}{2}}\right) ^{\ast }\right)  \notag \\
&=&\left\Vert A^{\frac{1}{2}}B^{\frac{1}{2}}\left( A^{\frac{1}{2}}B^{\frac{1%
}{2}}\right) ^{\ast }\right\Vert ^{\frac{r}{2}}  \notag \\
&=&\left\Vert A^{\frac{1}{2}}B^{\frac{1}{2}}\right\Vert ^{r}.  \label{5}
\end{eqnarray}%
Now, the result follows from Corollary \ref{c1}.
\end{proof}
An important special case of Theorem \ref{man-kal}, which refines inequality \eqref{abcd123} can be stated as follows.
\begin{corollary}
Let $A, B\in {\mathbb B}({\mathscr H})$ and $r\geq1$. Then%
\begin{eqnarray*}
\left\Vert A+B\right\Vert ^{r}&\leq& \frac{1}{2^{2-r}}\max \left( \left\Vert
\left\vert A\right\vert ^{2tr}+\left\vert A\right\vert ^{2\left( 1-t\right)
r}\right\Vert ,\left\Vert \left\vert B^*\right\vert ^{2tr}+\left\vert
B^*\right\vert ^{2\left( 1-t\right) r}\right\Vert \right) \\&&+\frac{1}{2^{2-r}}\left( \left\Vert
|A|^{t}\left\vert B\right\vert ^{1-t}\right\Vert^{r} +\left\Vert
\left\vert B^*\right\vert ^{t}|A^{\ast }|^{1-t}\right\Vert^{r} \right) .
\end{eqnarray*}%
In particular, if $A$ and $B$ are normal, then
\begin{eqnarray*}
\left\Vert A+B\right\Vert ^{r}
\leq \frac{1}{2^{1-r}}\max \left( \left\Vert A\right\Vert ^{r},\left\Vert B\right\Vert
^{r}\right) +\frac{1}{2^{1-r}}\left\Vert AB\right\Vert ^{\frac{r}{2}}.
\end{eqnarray*}
\end{corollary}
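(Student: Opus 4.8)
The plan is to reduce the statement to Theorem \ref{man-kal} by controlling $\|A+B\|$ through the numerical radius of an off-diagonal operator matrix. The key preliminary estimate I would establish is
\begin{equation*}
\|A+B\|\leq 2\,w\left(\left[\begin{array}{cc}0&A\\ B^{*}&0\end{array}\right]\right).
\end{equation*}
To prove it, set $T=\left[\begin{smallmatrix}0&A\\ B^{*}&0\end{smallmatrix}\right]$ and note that for every real $\theta$,
\begin{equation*}
\textrm{Re}\left(e^{i\theta}T\right)=\frac{1}{2}\left[\begin{array}{cc}0&e^{i\theta}A+e^{-i\theta}B\\ \left(e^{i\theta}A+e^{-i\theta}B\right)^{*}&0\end{array}\right].
\end{equation*}
Since any self-adjoint off-diagonal matrix $\left[\begin{smallmatrix}0&X\\ X^{*}&0\end{smallmatrix}\right]$ has norm $\|X\|$, this gives $\|\textrm{Re}(e^{i\theta}T)\|=\frac{1}{2}\|e^{i\theta}A+e^{-i\theta}B\|$. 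Taking the supremum over $\theta$ and using Lemma \ref{1}(a), together with the particular value $\theta=0$, yields the estimate.

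Next I would raise the estimate to the power $r\geq1$ and apply Theorem \ref{man-kal} to the pair $(A,B^{*})$ with $f(x)=x^{t}$ and $g(x)=x^{1-t}$. Since $(B^{*})^{*}=B$, the two off-diagonal terms produced by the theorem become $\|f(|B^{*}|)g(|A^{*}|)\|=\||B^{*}|^{t}|A^{*}|^{1-t}\|$ and $\|f(|A|)g(|B|)\|=\||A|^{t}|B|^{1-t}\|$, while the diagonal terms give $\max\left(\||A|^{2tr}+|A|^{2(1-t)r}\|,\||B^{*}|^{2tr}+|B^{*}|^{2(1-t)r}\|\right)$. Multiplying the bound of Theorem \ref{man-kal} by $2^{r}$ turns the constant $\frac{1}{4}$ into $\frac{2^{r}}{4}=\frac{1}{2^{2-r}}$, which is exactly the first asserted inequality.

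For the particular case I would take $t=\frac{1}{2}$ and use normality. Then $2tr=2(1-t)r=r$, so each diagonal sum collapses to $2|A|^{r}$ (respectively $2|B^{*}|^{r}$); since $\||A|^{r}\|=\|A\|^{r}$ and $\|B^{*}\|=\|B\|$, the maximum equals $2\max(\|A\|^{r},\|B\|^{r})$, and the extra factor $2$ converts $\frac{1}{2^{2-r}}$ into $\frac{1}{2^{1-r}}$. For normal $A,B$ we have $|A^{*}|=|A|$ and $|B^{*}|=|B|$, so both off-diagonal terms equal $\||A|^{1/2}|B|^{1/2}\|^{r}$, and it remains to show $\||A|^{1/2}|B|^{1/2}\|^{2}\leq\|AB\|$. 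By the commutativity of the spectral radius, $\||A|^{1/2}|B|^{1/2}\|^{2}=\||A|^{1/2}|B||A|^{1/2}\|=r(|A||B|)$; on the other hand, a direct computation using $A^{*}A=|A|^{2}$ and $BB^{*}=|B|^{2}$ gives $\|AB\|=\||A|B\|=\||A||B|\|$, and since $r(|A||B|)\leq\||A||B|\|$ the claim follows.

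The main obstacle, I expect, is twofold: first, recognizing that Theorem \ref{man-kal} must be applied to $(A,B^{*})$ rather than $(A,B)$ so that the adjoints $|A^{*}|$ and $|B|$ appear in the correct positions; and second, for the normal case, establishing the chain $\||A|^{1/2}|B|^{1/2}\|^{2}=r(|A||B|)\leq\||A||B|\|=\|AB\|$, whose last equality rests on the identity $\|AB\|=\||A||B|\|$ valid for normal operators.
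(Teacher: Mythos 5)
Your proposal is correct and follows essentially the same route as the paper: both reduce $\Vert A+B\Vert$ to $2\,w\!\left(\left[\begin{smallmatrix}0&A\\ B^{*}&0\end{smallmatrix}\right]\right)$ via Lemma \ref{1}(a) (the paper writes $\Vert A+B^{*}\Vert=\Vert T+T^{*}\Vert\leq 2\,w(T)$ for $T=\left[\begin{smallmatrix}0&A\\ B&0\end{smallmatrix}\right]$ and substitutes $B\mapsto B^{*}$ at the end, which is equivalent to your direct choice) and then apply Theorem \ref{man-kal} with $f(x)=x^{t}$, $g(x)=x^{1-t}$, with the same constant bookkeeping $\frac{2^{r}}{4}=\frac{1}{2^{2-r}}$. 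The only divergence is the final step of the normal case: the paper invokes equality $(\ref{5})$ and polar decompositions to reach $\Vert AB^{*}\Vert^{\frac{r}{2}}$ (which matches the stated $\Vert AB\Vert^{\frac{r}{2}}$ only because $\Vert AB^{*}\Vert=\Vert\,|A|\,|B|\,\Vert=\Vert AB\Vert$ for normal operators, a point left implicit there), whereas your $C^{*}$-identity chain $\Vert AB\Vert=\Vert\,|A|B\Vert=\Vert\,|A|\,|B|\,\Vert$ together with $\Vert\,|A|^{1/2}|B|^{1/2}\Vert^{2}=r(|A|\,|B|)$ lands exactly on the stated bound, a slightly cleaner finish.
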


\begin{proof}
Applying Lemma \ref{1} and Theorem \ref{main-man}, we have%
\begin{eqnarray*}
\left\Vert A+B^{\ast }\right\Vert ^{r} &=&\left\Vert T+T^{\ast }\right\Vert
^{r} \\
&\leq &2^r\underset{\theta \in
\mathbb{R}
}{\max }\left\Vert \textrm{Re}\left( e^{i\theta }T\right) \right\Vert ^{r} \\
&=&2^rw^{r}\left( T\right) \\
&\leq &\frac{2^r}{4}\max \left( \left\Vert \left\vert A\right\vert
^{2tr}+\left\vert A\right\vert ^{2\left( 1-t\right) r}\right\Vert
,\left\Vert \left\vert B\right\vert ^{2tr}+\left\vert B\right\vert ^{2\left(
1-t\right) r}\right\Vert \right) \\&&+\frac{2^r}{4}\left( \left\Vert |A|^{t}\left\vert
B^{\ast }\right\vert ^{1-t}\right\Vert^{r} +\left\Vert \left\vert B\right\vert
^{t}|A^{\ast }|^{1-t}\right\Vert^{r} \right) \\&&
\textrm{(by Theorem \ref{man-kal})},
\end{eqnarray*}%
where $T=\left[
\begin{array}{cc}
0 & A \\
B & 0%
\end{array}%
\right] .$ Now, the desired result follows by replacing $B$ by $B^{\ast }$.
For the particular case, since $A$ and $B$ are normal,
then $\left\vert B^{\ast }\right\vert =$ $\left\vert B\right\vert $ and $%
|A^{\ast }|=|A|.$ Applying equality $($\ref{5}$)$ for the operators $%
\left\vert A\right\vert ^{\frac{1}{2}}$ and $\left\vert B\right\vert ^{\frac{%
1}{2}}$, we have
\begin{eqnarray*}
\left\Vert \left\vert A\right\vert ^{\frac{1}{2}}\left\vert B\right\vert ^{%
\frac{1}{2}}\right\Vert ^{r} &=&r^{\frac{r}{2}}\left( \left\vert
A\right\vert \left\vert B\right\vert \right) \\
&\leq &\left\Vert \left\vert A\right\vert \left\vert B\right\vert
\right\Vert ^{\frac{r}{2}} \\
&=&\left\Vert U^{\ast }AB^{\ast }V\right\Vert ^{\frac{r}{2}} \\
&=&\left\Vert AB^{\ast }\right\Vert ^{\frac{r}{2}},
\end{eqnarray*}%
where $A=U\vert A\vert$ and $B=V\left\vert B\right\vert $ are the
polar decompositions of the operators $A$ and $B.$ This completes the proof
of the corollary.
\end{proof}
In the next result, we show another generalization of inequality \eqref{yam-sheb}.
\begin{theorem}\label{main1}
Let $A\in {\mathbb B}({\mathscr H})$ and $f,g, h $ be  non-negative non-decreasing continuous functions on $[0,\infty)$ such that $f(x)g(x)=x\,\,(x\geq0)$. Then
\begin{align*}
h\left(w(A)\right)\leq \frac{1}{2}\Big(h\left(w\left(\tilde{A}_{f,g}\right)\right)
+ \left\Vert h(|A|)\right\Vert \Big).
\end{align*}
\end{theorem}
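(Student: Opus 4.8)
The plan is to reduce the statement to the single operator inequality
\[
w(A)\leq \tfrac{1}{2}\big(\Vert A\Vert + w(\tilde{A}_{f,g})\big),
\]
which is the case $h=\mathrm{id}$ (and, for $f(x)=x^{t},g(x)=x^{1-t}$, is exactly \eqref{yam-sheb}), and then to feed this into $h$. For the passage to $h$ I would argue as follows: since $h$ is non-decreasing, the inequality above yields $h(w(A))\leq h\big(\tfrac12(\Vert A\Vert+w(\tilde{A}_{f,g}))\big)$; convexity of $h$ then gives $h\big(\tfrac12(\Vert A\Vert+w(\tilde{A}_{f,g}))\big)\leq \tfrac12 h(\Vert A\Vert)+\tfrac12 h(w(\tilde{A}_{f,g}))$; and since $h$ is non-negative, non-decreasing and continuous, the functional calculus gives $\Vert h(|A|)\Vert = h(\Vert A\Vert)$. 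Substituting the last identity finishes the proof. (The averaging step requires convexity of $h$, as in Theorem \ref{main-man}; monotonicity alone is not enough.)

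The substance is the operator inequality, and I would derive it from Lemma \ref{kit1} rather than from the polarization computation of Theorem \ref{main-man} (that computation only delivers the larger first term $\tfrac14\Vert f^{2}(|A|)+g^{2}(|A|)\Vert$). Fix $\theta\in\mathbb{R}$ and write $2\,\textrm{Re}(e^{i\theta}A)=e^{i\theta}A+e^{-i\theta}A^{\ast}$. Using $A=U|A|$ and $|A|=f(|A|)g(|A|)=g(|A|)f(|A|)$, I factor the two summands as $e^{i\theta}A=XY$ and $e^{-i\theta}A^{\ast}=ST$, where
\[
X=e^{i\theta}Ug(|A|),\qquad Y=f(|A|),\qquad S=e^{-i\theta}f(|A|),\qquad T=g(|A|)U^{\ast}.
\]
A direct check gives $YX=e^{i\theta}\tilde{A}_{f,g}$, $TS=e^{-i\theta}\tilde{A}_{f,g}^{\ast}$, $YS=e^{-i\theta}f^{2}(|A|)$, and $TX=e^{i\theta}g(|A|)(U^{\ast}U)g(|A|)$.

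Now Lemma \ref{kit1} is exactly tuned for this: unimodular invariance of the numerical radius gives $w(YX)=w(TS)=w(\tilde{A}_{f,g})$, so the term $(w(YX)-w(TS))^{2}$ drops out; and because $f,g$ are non-decreasing and non-negative, so are $f^{2},g^{2}$, whence $\Vert YS\Vert=f^{2}(\Vert A\Vert)$ and $\Vert TX\Vert\leq \Vert g^{2}(|A|)\Vert=g^{2}(\Vert A\Vert)$ (the factor $U^{\ast}U\leq I$ only helps). Therefore Lemma \ref{kit1} yields
\[
r\big(2\,\textrm{Re}(e^{i\theta}A)\big)\leq w(\tilde{A}_{f,g})+\sqrt{f^{2}(\Vert A\Vert)\,g^{2}(\Vert A\Vert)}=w(\tilde{A}_{f,g})+\Vert A\Vert,
\]
using $f(\Vert A\Vert)g(\Vert A\Vert)=\Vert A\Vert$. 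Since $2\,\textrm{Re}(e^{i\theta}A)$ is self-adjoint its spectral radius equals its norm, so $2\Vert \textrm{Re}(e^{i\theta}A)\Vert\leq w(\tilde{A}_{f,g})+\Vert A\Vert$; maximizing over $\theta$ and using Lemma \ref{1}(a) gives the operator inequality.

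The step I expect to be the main obstacle is selecting the factorization so that Lemma \ref{kit1} collapses in the right way: the four products must place $\tilde{A}_{f,g}$ in both numerical-radius slots (so that the discriminant reduces to $2\sqrt{\Vert f^{2}(|A|)\Vert\,\Vert g^{2}(|A|)\Vert}$) while exposing $f(|A|)g(|A|)=|A|$ in the cross term so that it evaluates to $\Vert A\Vert$. The routine points to verify are the identity $g(|A|)U^{\ast}f(|A|)=\tilde{A}_{f,g}^{\ast}$ and the harmless presence of $U^{\ast}U\leq I$ in $TX$, where only the upper bound is needed.
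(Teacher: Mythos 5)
Your proposal is correct and follows essentially the same route as the paper: the paper's proof uses exactly your factorization $X=e^{i\theta}Ug(|A|)$, $Y=f(|A|)$, $S=e^{-i\theta}f(|A|)$, $T=g(|A|)U^{\ast}$ in Lemma \ref{kit1}, bounds the cross term by $\Vert f(|A|)\Vert^{2}\Vert g(|A|)\Vert^{2}=f^{2}(\Vert A\Vert)g^{2}(\Vert A\Vert)$ via functional calculus, and then applies monotonicity, convexity, and $\Vert h(|A|)\Vert=h(\Vert A\Vert)$ just as you do. Your parenthetical observation is also accurate: the paper's own proof invokes the convexity of $h$ even though convexity is omitted from the theorem's hypotheses, so your flagged requirement matches what the argument actually needs.
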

\begin{proof}
Let $A=U|A|$ be the polar decomposition of A.
Then for every $\theta\in\mathbb{R}$, we have
\begin{eqnarray}\label{eq1}
\Vert\textrm{Re}\left( e^{i\theta }A\right)\Vert&=&r\left(\textrm{Re}\left( e^{i\theta }A\right)\right)\nonumber\\&
=&\frac{1}{2}r\left(e^{i\theta }A+e^{-i\theta }A^\ast\right)\nonumber\\&
=&\frac{1}{2}r\left(e^{i\theta }U|A|+e^{-i\theta }|A|U^\ast\right)\nonumber\\&
=&\frac{1}{2}r\left(e^{i\theta }Ug(|A|)f(|A|)+e^{-i\theta }f(|A|)g(|A|)U^\ast\right).
\end{eqnarray}
Now, if we put $X=e^{i\theta}Ug(|A|)$, $Y=f(|A|)$, $S=e^{-i\theta}f(|A|)$ and $T=g(|A|)U^\ast$ in Lemma \ref{kit1}, then we get
\begin{eqnarray}\label{eq2}
\quad&&\hspace{-2.5 cm}r\big(e^{i\theta }Ug(|A|)f(|A|)+e^{-i\theta }f(|A|)g(|A|)U^\ast\big)\nonumber\\&\leq&
\frac{1}{2}\Big(w(f(|A|)Ug(|A|))+w(g(|A|)U^\ast f(|A|))\Big)\nonumber\\&&+\frac{1}{2}
\sqrt{4\Vert e^{-i\theta }f(|A|)g(|A|)\Vert\Vert g(|A|)U^\ast e^{i\theta }Uf(|A|)\Vert}\nonumber\\&&
\qquad\qquad\qquad\qquad\qquad\textrm{(by Lemma \ref{kit1})}\nonumber\\&
\leq &w(f(|A|)Ug(|A|))
+\sqrt{\Vert f(|A|)\Vert \Vert f(|A|)\Vert\Vert g(|A|)\Vert \Vert g(|A|)\Vert}\nonumber\\&
= &w(f(|A|)Ug(|A|))
+\sqrt{ f(\Vert A\Vert) g(\Vert A\Vert) g(\Vert A\Vert) f(\Vert A\Vert)}\nonumber\\&&
\qquad\qquad\qquad\qquad\qquad\textrm{(by the functional calculus)}\nonumber\\&=&w(f(|A|)Ug(|A|))
+\sqrt{ \Vert A\Vert \Vert A\Vert }\nonumber\\&=&w\left(\tilde{A}_{f,g}\right)
+ \Vert A\Vert.
\end{eqnarray}
Using inequalities \eqref{eq1}, \eqref{eq2} and Lemma \ref{1} we get
\begin{align*}
\omega(A)=\underset{\theta \in
\mathbb{R}
}{\max }\left\Vert \textrm{Re}\left( e^{i\theta }A\right) \right\Vert\leq \frac{1}{2}\Big(w\left(\tilde{A}_{f,g}\right)
+ \Vert A\Vert\Big) .
\end{align*}
Hence
\begin{eqnarray*}
h\left(w(A)\right)&\leq& h\left(\frac{1}{2}\left[w\left(\tilde{A}_{f,g}\right)
+ \left\Vert A\right\Vert \right]\right)
\\&&\qquad\qquad\qquad(\textrm{by the monotonicity of}\,\,h)
\\&\leq& \frac{1}{2}h\left(w\left(\tilde{A}_{f,g}\right)\right)
+ \frac{1}{2}h\left(\left\Vert A\right\Vert \right)
\\&&\qquad\qquad\qquad(\textrm{by the convexity of}\,\,h)
\\&=&\frac{1}{2}h\left(w\left(\tilde{A}_{f,g}\right)\right)
+ \frac{1}{2}\left\Vert h(|A|)\right\Vert,
\end{eqnarray*}
as required.
\end{proof}
\textbf{Another proof for Theorem \ref{main-man}}:
We can obtain Theorem \ref{main-man} from Theorem \ref{main1}.
To see this, first note that by the hypotheses of Theorem \ref{main-man} we have
\begin{eqnarray}\label{cic}
h(|A|)&=&h(g(|A|)f(|A|))\nonumber\\&\leq &h\left(\frac{g^2(|A|)+f^2(|A|)}{2}\right)\qquad\textrm{(by the arithmetic-geometric inequality)}\nonumber\\&\leq&\frac{1}{2}\left(h\left(g^2(|A|)\right)+h\left(f^2(|A|)\right)\right)\qquad\textrm{(by the convexity of\,} h).
\end{eqnarray}
Hence, using Theorem \ref{main1} and inequality \eqref{cic} we get
\begin{eqnarray*}
h\left(w(A)\right)&\leq& \frac{1}{2}\Big[h\left(w\left(\tilde{A}_{f,g}\right)\right)
+ \left\Vert h(|A|)\right\Vert \Big]
\\&\leq& \frac{1}{2}\Big[h\left(w\left(\tilde{A}_{f,g}\right)\right)
+ \frac{1}{2}\left\Vert h\left(g^2(|A|)\right)+h\left(f^2(|A|)\right)\right\Vert \Big]
\\&=&\frac{1}{2}h\left(w\left(\tilde{A}_{f,g}\right)\right)
+ \frac{1}{4}\left\Vert h\left(g^2(|A|)\right)+h\left(f^2(|A|)\right)\right\Vert .
\end{eqnarray*}
\begin{remark}
For the special case $f(x)=x^t$ and $g=x^{1-t}\,\,(t\in[0,1])$, we obtain the inequality \eqref{yam-sheb}
\begin{align*}
w(A)\leq \frac{1}{2}\left(w\left(\tilde{A}_{t}\right)+\Vert A\Vert\right),
\end{align*}
where $A\in {\mathbb B}({\mathscr H})$ and $t\in[0,1]$.
\end{remark}
Using Theorem \ref{main1}, we get the following result.
\begin{corollary}
Let $A, B\in {\mathbb B}({\mathscr H})$   and $f,g$ be two non-negative non-decreasing continuous functions such that $f(x)g(x)=x\,\,(x\geq0)$. Then
\begin{align*}
2w^r\left(\left[
\begin{array}{cc}
0 & A \\
B & 0%
\end{array}%
\right]\right)\leq\max\{\Vert A\Vert^r,\Vert B\Vert^r\}+\frac{1}{2}\big(\left\Vert f(|B|)g(|A^\ast|)\right\Vert^r+\left\Vert f(|A|)g(|B^\ast|)\right\Vert^r\big),
\end{align*}
where $r\geq1$.
\end{corollary}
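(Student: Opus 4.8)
The plan is to feed the off-diagonal operator matrix $T=\left[\begin{array}{cc} 0 & A \\ B & 0\end{array}\right]$ into Theorem \ref{main1}, choosing the convex, non-negative, non-decreasing function $h(x)=x^{r}$ (admissible since $r\geq 1$). This step alone gives
\begin{align*}
w^{r}(T)\leq \frac{1}{2}\Big(w^{r}\big(\tilde{T}_{f,g}\big)+\big\Vert\,|T|^{r}\,\big\Vert\Big),
\end{align*}
and the remaining work is to rewrite the two right-hand terms in terms of $A$ and $B$.

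For $\big\Vert\,|T|^{r}\,\big\Vert$ I would use $\big\Vert\,|T|^{r}\,\big\Vert=\Vert T\Vert^{r}$ together with the elementary fact that an off-diagonal operator matrix satisfies $\Vert T\Vert=\max\{\Vert A\Vert,\Vert B\Vert\}$; this yields $\big\Vert\,|T|^{r}\,\big\Vert=\max\{\Vert A\Vert^{r},\Vert B\Vert^{r}\}$. For $w^{r}(\tilde{T}_{f,g})$ I would recycle the computation of $\tilde{T}_{f,g}$ and the estimate \eqref{do} obtained inside the proof of Theorem \ref{man-kal}, namely
\begin{align*}
w\big(\tilde{T}_{f,g}\big)\leq \frac{1}{2}\Vert f(|B|)g(|A^{\ast}|)\Vert+\frac{1}{2}\Vert f(|A|)g(|B^{\ast}|)\Vert .
\end{align*}
Raising to the power $r$ and applying convexity of $x\mapsto x^{r}$ in the Jensen form $\big(\tfrac{a+b}{2}\big)^{r}\leq\tfrac{1}{2}(a^{r}+b^{r})$ turns this into
\begin{align*}
w^{r}\big(\tilde{T}_{f,g}\big)\leq \frac{1}{2}\Big(\Vert f(|B|)g(|A^{\ast}|)\Vert^{r}+\Vert f(|A|)g(|B^{\ast}|)\Vert^{r}\Big).
\end{align*}
Substituting both displays into the bound from Theorem \ref{main1} and multiplying through by $2$ then reproduces the asserted inequality.

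Since every ingredient is already available, I expect no genuine obstacle; the argument is essentially a repackaging of Theorem \ref{main1} and the internal estimate \eqref{do}. The only points demanding attention are the verification $\Vert T\Vert=\max\{\Vert A\Vert,\Vert B\Vert\}$, which follows from $T^{\ast}T=\left[\begin{array}{cc} B^{\ast}B & 0 \\ 0 & A^{\ast}A\end{array}\right]$ and $\Vert T\Vert^{2}=\Vert T^{\ast}T\Vert$, and checking that $h(x)=x^{r}$ meets the hypotheses of Theorem \ref{main1} (non-negative, non-decreasing, and convex for $r\geq1$) while $f,g$ are assumed non-negative non-decreasing as required.
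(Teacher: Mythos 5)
Your proposal is correct and follows essentially the same route as the paper: apply Theorem \ref{main1} with $h(x)=x^{r}$ to $T=\left[\begin{smallmatrix} 0 & A \\ B & 0\end{smallmatrix}\right]$, identify $\Vert\,|T|^{r}\Vert=\max\{\Vert A\Vert^{r},\Vert B\Vert^{r}\}$, and bound $w^{r}(\tilde{T}_{f,g})$ via inequality \eqref{do} together with the convexity of $x\mapsto x^{r}$. Your explicit verification of $\Vert T\Vert=\max\{\Vert A\Vert,\Vert B\Vert\}$ via $T^{\ast}T$ is a detail the paper leaves implicit, but the argument is the same.
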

\begin{proof}
Using Theorem \ref{main1} and inequality \eqref{do}, we have
\begin{align*}
2w^r\left(\left[
\begin{array}{cc}
0 & A \\
B & 0%
\end{array}%
\right]\right)&\leq\left\Vert\left[
\begin{array}{cc}
0 & A \\
B & 0%
\end{array}%
\right]\right\Vert^r+w^r\left(\tilde{T}_{f,g}\right)&
\\&=
\max\{\Vert A\Vert^r,\Vert B\Vert^r\}+\left(\frac{1}{2}\big[\left\Vert f(|B|)g(|A^\ast|)\right\Vert+\Vert f(|A|)g(|B^\ast|)\Vert\big]\right)^r&
\\&\leq
\max\{\Vert A\Vert^r,\Vert B\Vert^r\}+\frac{1}{2}\big(\left\Vert f(|B|)g(|A^\ast|)\right\Vert^r+\Vert f(|A|)g(|B^\ast|)\Vert^r\big)
\end{align*}
and the proof is complete.
\end{proof}

\begin{corollary}
Let $A, B\in {\mathbb B}({\mathscr H})$  and $f,g$ be two non-negative non-decreasing continuous functions on $[0,\infty)$ such that $f(x)g(x)=x\,\,(x\geq0)$. Then
\begin{eqnarray*}
 \Vert A+B\Vert\leq\max\{\Vert A\Vert,\Vert B\Vert\}+ \frac{1}{2}\Big(\big\Vert f(|B|)g(|A|)\big\Vert+\big\Vert f(|A^\ast|)g(|B^\ast|)\big\Vert\Big).
\end{eqnarray*}
\end{corollary}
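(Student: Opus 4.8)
The plan is to realize $\|A+B\|$ as twice the numerical radius of a carefully chosen off-diagonal operator matrix and then feed that matrix into Theorem \ref{main1}. The correct choice turns out to be
\[
T=\left[\begin{array}{cc} 0 & A^{\ast} \\ B & 0\end{array}\right],
\]
with $A^{\ast}$ (not $A$) in the corner. First I would note that $T+T^{\ast}$ is a self-adjoint off-diagonal matrix whose corner is $A+B$, so $\|T+T^{\ast}\|=\|A+B\|$, while $\|T\|=\max\{\|A\|,\|B\|\}$ since the norm of an off-diagonal matrix is the larger of the norms of its entries. Combining this with Lemma \ref{1}(a) gives
\[
\|A+B\|=\|T+T^{\ast}\|=2\|\mathrm{Re}\,T\|\le 2\max_{\theta}\|\mathrm{Re}(e^{i\theta}T)\|=2w(T),
\]
which reduces the problem to an estimate for $w(T)$.

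Next I would apply Theorem \ref{main1} to $T$ with $h$ the identity function (equivalently, the intermediate bound $w(T)\le\frac{1}{2}(w(\tilde{T}_{f,g})+\|T\|)$ obtained in its proof), so that
\[
\|A+B\|\le 2w(T)\le w(\tilde{T}_{f,g})+\max\{\|A\|,\|B\|\}.
\]
It then remains to bound $w(\tilde{T}_{f,g})$, and for this I would rerun the computation leading to inequality \eqref{do} in the proof of Theorem \ref{man-kal}, but for this particular $T$. Writing the polar decompositions $A=U|A|$ and $B=V|B|$ (so that $A^{\ast}=U^{\ast}|A^{\ast}|$ with $|A^{\ast}|=U|A|U^{\ast}$), the polar decomposition of $T$ is $\left[\begin{smallmatrix}0&U^{\ast}\\V&0\end{smallmatrix}\right]\left[\begin{smallmatrix}|B|&0\\0&|A^{\ast}|\end{smallmatrix}\right]$, whence
\[
\tilde{T}_{f,g}=\left[\begin{array}{cc} 0 & f(|B|)U^{\ast}g(|A^{\ast}|) \\ f(|A^{\ast}|)Vg(|B|) & 0\end{array}\right].
\]

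Splitting $\tilde{T}_{f,g}$ into its two one-cornered parts and using Lemma \ref{1}(b) (exactly as in \eqref{do}), together with the substitutions $U^{\ast}g(|A^{\ast}|)=g(|A|)U^{\ast}$ and $Vg(|B|)=g(|B^{\ast}|)V$ coming from $|A^{\ast}|=U|A|U^{\ast}$ and $|B^{\ast}|=V|B|V^{\ast}$, I would obtain
\[
w(\tilde{T}_{f,g})\le\frac{1}{2}\|f(|B|)g(|A|)\|+\frac{1}{2}\|f(|A^{\ast}|)g(|B^{\ast}|)\|,
\]
and substituting this into the previous display gives precisely the asserted inequality. The hard part is purely the bookkeeping of adjoints: placing $A^{\ast}$ rather than $A$ in $T$ is exactly what makes the partial-isometry cancellations produce the unmixed factors $f(|B|)g(|A|)$ and $f(|A^{\ast}|)g(|B^{\ast}|)$ (instead of the mixed factors $f(|B|)g(|A^{\ast}|)$ and $f(|A|)g(|B^{\ast}|)$ appearing in Theorem \ref{man-kal}), and one must check that in each term the leftover contraction $\|XW\|\le\|X\|$ falls on the correct side.
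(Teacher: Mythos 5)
You are correct, and your route is essentially the paper's own: the paper takes $T=\left[\begin{smallmatrix}0&A\\ B&0\end{smallmatrix}\right]$, bounds $\Vert A+B^{\ast}\Vert\le 2w(T)\le \Vert T\Vert+w\big(\tilde{T}_{f,g}\big)$ via Lemma \ref{1} and Theorem \ref{main1}, invokes the bound \eqref{do} on $w\big(\tilde{T}_{f,g}\big)$, and then replaces $B$ by $B^{\ast}$, while you place $A^{\ast}$ in the corner from the outset and rerun the \eqref{do} computation using the intertwinings $U^{\ast}g(|A^{\ast}|)=g(|A|)U^{\ast}$ and $Vg(|B|)=g(|B^{\ast}|)V$. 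The two arguments differ only in bookkeeping; if anything, your variant lands exactly on the stated right-hand side, whereas the paper's literal substitution $B\mapsto B^{\ast}$ produces the equivalent form with $f$ and $g$ interchanged, equal to the stated one after taking adjoints inside the norms.
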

\begin{proof}
Let $T=\left[
\begin{array}{cc}
0 & A \\
B & 0%
\end{array}%
\right]$. Then
\begin{eqnarray*}
\left\Vert A+B^{^\ast }\right\Vert  &=&\left\Vert T+T^{^\ast }\right\Vert
 \\
&\leq &2\underset{\theta \in
\mathbb{R}
}{\max }\left\Vert \textrm{Re}\left( e^{i\theta }T\right) \right\Vert  \\
&=&w(T)\qquad\qquad\qquad(\textrm {by Lemma \ref{1}})\\
&\leq&\max\{\Vert A\Vert,\Vert B\Vert\}+ \frac{1}{2}\Big(\big\Vert f(|B|)g(|A^\ast|)\big\Vert+\big\Vert f(|A|)g(|B^\ast|)\big\Vert\Big)\\
&&\qquad\qquad\qquad\qquad(\textrm {by Theorem \ref{main1}}).
\end{eqnarray*}
If we replace $B$ by  $B^\ast$, then we get the desired result.
\end{proof}
In the last results, we present some upper bounds for operator matrices. For this purpose, we need the following lemma.
\begin{lemma}\cite[Theorem 1]{KIT}\label{man5}
Let $A\in{\mathbb B}({\mathscr H})$ and $x, y\in {\mathscr H}$ be any vectors.  If $f$, $g$ are non-negative  continuous functions on $[0, \infty)$ which are satisfying the relation $f(x)g(x)=x\,(x\geq0)$,  then
\begin{align*}
| \left\langle Ax, y \right\rangle |^2 \leq \left\langle f^2(|A |)x ,x \right\rangle\, \left\langle g^2(| A^*|)y,y\right\rangle.
 \end{align*}
 \end{lemma}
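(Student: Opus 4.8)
The plan is to run the standard mixed Schwarz argument through the polar decomposition $A = U|A|$, the decisive new ingredient being the hypothesis $f(x)g(x)=x$, which by the continuous functional calculus yields the operator factorization $|A| = f(|A|)\,g(|A|)$; the two factors commute, being functions of the same positive operator. First I would rewrite the inner product: since $A = U|A| = U\,g(|A|)\,f(|A|)$ and $g(|A|)$ is self-adjoint,
\begin{align*}
\langle Ax, y\rangle = \langle U\,g(|A|)\,f(|A|)x,\, y\rangle = \langle f(|A|)x,\; g(|A|)\,U^{\ast} y\rangle .
\end{align*}
Applying the Cauchy--Schwarz inequality to the right-hand side and squaring gives
\begin{align*}
|\langle Ax, y\rangle|^{2} \leq \|f(|A|)x\|^{2}\,\|g(|A|)U^{\ast} y\|^{2}.
\end{align*}

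Next I would turn the two norms into quadratic forms. Because $f(|A|), g(|A|)\geq 0$ are self-adjoint, $\|f(|A|)x\|^{2} = \langle f^{2}(|A|)x, x\rangle$ and $\|g(|A|)U^{\ast} y\|^{2} = \langle g^{2}(|A|)U^{\ast} y,\, U^{\ast} y\rangle = \langle U g^{2}(|A|) U^{\ast} y,\, y\rangle$. Comparing with the claimed bound, the proof reduces to the single operator inequality $U\,g^{2}(|A|)\,U^{\ast} \leq g^{2}(|A^{\ast}|)$.

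The main obstacle is precisely this last step, since $U$ is only a partial isometry and not unitary, so the intertwining $h(U|A|U^{\ast}) = U\,h(|A|)\,U^{\ast}$ is not automatic. I would argue as follows. From $|A^{\ast}|^{2} = AA^{\ast} = U|A|^{2}U^{\ast}$ together with the identity $U^{\ast}U\,|A| = |A|$ (valid because $\textrm{ker}\,U = (\overline{\textrm{rng}\,|A|})^{\perp}$, so $U^{\ast}U$ is the projection onto $\overline{\textrm{rng}\,|A|}$), one gets $(U|A|U^{\ast})^{2} = U|A|(U^{\ast}U)|A|U^{\ast} = U|A|^{2}U^{\ast} = |A^{\ast}|^{2}$, and uniqueness of positive square roots gives $|A^{\ast}| = U|A|U^{\ast}$. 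The same relation $U^{\ast}U|A| = |A|$ shows by induction that $U|A|^{k}U^{\ast} = |A^{\ast}|^{k}$ for every $k\geq 1$, hence $U\,p(|A|)\,U^{\ast} = p(|A^{\ast}|)$ for each polynomial $p$ with $p(0)=0$; passing to uniform limits on the compact spectra yields $U\,h(|A|)\,U^{\ast} = h(|A^{\ast}|)$ for every continuous $h$ with $h(0)=0$.

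Finally I would dispose of the constant term. Writing $g^{2} = \big(g^{2} - g^{2}(0)\big) + g^{2}(0)$ and applying the previous paragraph to $h = g^{2} - g^{2}(0)$, while $U\,(g^{2}(0)I)\,U^{\ast} = g^{2}(0)\,UU^{\ast}$, I obtain
\begin{align*}
U\,g^{2}(|A|)\,U^{\ast} = g^{2}(|A^{\ast}|) - g^{2}(0)\,(I - UU^{\ast}) \leq g^{2}(|A^{\ast}|),
\end{align*}
because $g^{2}(0)\geq 0$ and $I - UU^{\ast}$ is a projection, hence positive. Evaluating the quadratic form at $y$ and combining with the two displayed bounds gives $|\langle Ax, y\rangle|^{2} \leq \langle f^{2}(|A|)x, x\rangle\,\langle g^{2}(|A^{\ast}|)y, y\rangle$, as required.
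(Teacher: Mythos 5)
Your proof is correct, and there is nothing in the paper to compare it against line by line: the paper states this lemma without proof, importing it verbatim from Kittaneh \cite[Theorem 1]{KIT}. What you have written is a faithful, self-contained reconstruction of the classical mixed Schwarz argument behind that citation: factor $|A|=g(|A|)f(|A|)$ by the functional calculus, move the self-adjoint factor across the inner product to get $\langle Ax,y\rangle=\langle f(|A|)x,\,g(|A|)U^{\ast}y\rangle$, apply Cauchy--Schwarz, and reduce everything to the operator inequality $U\,g^{2}(|A|)\,U^{\ast}\leq g^{2}(|A^{\ast}|)$. The two places where a blind attempt typically goes wrong are exactly the ones you treated carefully: since $U$ is only a partial isometry, the intertwining $U\,h(|A|)\,U^{\ast}=h(|A^{\ast}|)$ is justified only for continuous $h$ with $h(0)=0$, which you obtain from $U^{\ast}U|A|=|A|$ (legitimate because $\ker U=(\overline{\textrm{rng}\,|A|})^{\perp}$ makes $U^{\ast}U$ the projection onto $\overline{\textrm{rng}\,|A|}$), via $|A^{\ast}|=U|A|U^{\ast}$, powers, polynomials vanishing at $0$, and uniform limits; and the possible constant term $g^{2}(0)\neq 0$ is absorbed through $U\,g^{2}(|A|)\,U^{\ast}=g^{2}(|A^{\ast}|)-g^{2}(0)(I-UU^{\ast})\leq g^{2}(|A^{\ast}|)$. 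As a side benefit, your intertwining relation (with the kernel correction $g(|A|)=U^{\ast}g(|A^{\ast}|)U+g(0)(I-U^{\ast}U)$) is precisely what is needed to justify the identity $g(|A|)=U^{\ast}g(|A^{\ast}|)U$ that the paper invokes without comment in the proof of Theorem \ref{man-kal}, where it holds as stated only when $g(0)=0$ or $U$ has full initial space.
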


 \begin{theorem}
 Let
 $
 A, B, C, D\in {\mathbb B}({\mathscr H})$ and $f_i, g_i\,\,(1\leq i\leq 4)$ be non-negative continues functions such that $f_i(x)g_i(x)=x\,\,(1\leq i\leq 4)$ for all $x\in[0,\infty)$. Then
\begin{align*}
\omega \left(\left[\begin{array}{cc}
 A&B\\
 C&D
 \end{array}\right]\right)&\leq\max\left\{\left\|f_1^2(|A|)+g_2^2(|B^*|)+f_3^2(|C|)\right\|^\frac{1}{2},\left\Vert g_4(|D^*|)\right\Vert\right\}
 \\&\,\,+\max\left\{\left\Vert g_1(|A^*|)\right\Vert,\left\|f_2^2(|B|)+g_3^2(|C^*|)+f_4^2(|D|)\right\|^\frac{1}{2}\right\}.
\end{align*}%
 \end{theorem}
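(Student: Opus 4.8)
The plan is to estimate the quadratic form of the operator matrix directly and then pass to the supremum. Denote by $T$ the operator matrix in the statement, acting on $\mathscr H\oplus\mathscr H$, and write a generic unit vector as $\xi=(x,y)$ with $\|x\|^2+\|y\|^2=1$. Expanding the block action and using the triangle inequality gives
\[
|\langle T\xi,\xi\rangle|\le |\langle Ax,x\rangle|+|\langle By,x\rangle|+|\langle Cx,y\rangle|+|\langle Dy,y\rangle|.
\]
First I would apply Lemma \ref{man5} to each of the four scalar products, using the pair $(f_i,g_i)$ on the $i$-th entry (so $(f_1,g_1)$ for $A$, $(f_2,g_2)$ for $B$, and so on), with the vectors chosen to match each slot; for instance $|\langle By,x\rangle|^2\le\langle f_2^2(|B|)y,y\rangle\,\langle g_2^2(|B^{*}|)x,x\rangle$. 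This replaces each term by a product $c_i d_i$, where $c_i,d_i\ge0$ are the square roots of the two quadratic forms produced by the lemma.

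The heart of the argument is to reassemble these four products into a single inner product of two vectors in $\mathbb R^4$. I would set $u=(\langle f_1^2(|A|)x,x\rangle^{1/2},\langle g_2^2(|B^{*}|)x,x\rangle^{1/2},\langle f_3^2(|C|)x,x\rangle^{1/2},\langle g_4^2(|D^{*}|)y,y\rangle^{1/2})$ and collect the complementary factors into $v$, so that $c_1d_1+c_2d_2+c_3d_3+c_4d_4=\langle u,v\rangle$. The coordinates of $u$ are exactly the operators occurring in the first maximum of the statement and those of $v$ exactly the operators in the second. Then
\[
\|u\|^2=\langle(f_1^2(|A|)+g_2^2(|B^{*}|)+f_3^2(|C|))x,x\rangle+\langle g_4^2(|D^{*}|)y,y\rangle.
\]
Bounding each quadratic form by the operator norm of the (positive) operator times $\|x\|^2$ or $\|y\|^2$ and using $\|x\|^2+\|y\|^2=1$ shows $\|u\|\le\max\{\|f_1^2(|A|)+g_2^2(|B^{*}|)+f_3^2(|C|)\|^{1/2},\|g_4(|D^{*}|)\|\}$, and symmetrically $\|v\|\le\max\{\|g_1(|A^{*}|)\|,\|f_2^2(|B|)+g_3^2(|C^{*}|)+f_4^2(|D|)\|^{1/2}\}$, so each block is controlled by one of the two maxima appearing in the statement.

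It then remains to combine the two blocks and pass to the supremum over unit $\xi$, which turns the left side into $\omega(T)$ and assembles the two maxima into the right-hand side of the statement. The main obstacle is precisely this final combination: the natural Cauchy--Schwarz estimate $\langle u,v\rangle\le\|u\|\,\|v\|$ couples the two blocks multiplicatively, so the delicate point is to arrange the last step — distributing the weights $\|x\|$ and $\|y\|$ and invoking $\|x\|^2+\|y\|^2=1$ — so that the weights cancel cleanly and the two maxima are assembled additively, exactly as displayed. Once this bookkeeping is settled, taking the supremum over all unit vectors $\xi$ completes the argument.
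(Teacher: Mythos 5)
Up to the final step, your proposal reproduces the paper's own proof exactly: the same expansion of $\langle T\xi,\xi\rangle$ into four inner products, the same four applications of Lemma \ref{man5} with the pairing $(f_i,g_i)$ assigned to the $i$-th entry, and the same regrouping of the resulting eight square-rooted quadratic forms into the two blocks $u$ and $v$, whose norms are controlled by the two maxima via $\alpha\|x\|^2+\beta\|y\|^2\le\max\{\alpha,\beta\}$ when $\|x\|^2+\|y\|^2=1$. But the step you defer as ``bookkeeping'' is a genuine gap, and it cannot be closed. Cauchy--Schwarz gives $\sum_i c_id_i=\langle u,v\rangle\le\|u\|\,\|v\|\le M_1M_2$, where $M_1,M_2$ denote the two maxima in the statement, and a product of two nonnegative quantities is simply not dominated by their sum (already $ab\le a+b$ fails for $a,b>2$); no redistribution of the weights $\|x\|,\|y\|$ repairs this. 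Worse, the additive inequality you are asked to prove is false as stated: take $f_i(x)=g_i(x)=\sqrt{x}$ for all $i$, $A=B=C=0$, and $D=100I$. Then the left-hand side is $w(D)=100$, while $M_1=\|g_4(|D^*|)\|=\|10I\|=10$ and $M_2=\|f_4^2(|D|)\|^{1/2}=10$, so the right-hand side is $20$. (Homogeneity makes the failure transparent: with the square-root choices both maxima scale like $\|T\|^{1/2}$, so their sum cannot dominate $w(T)$, which scales like $\|T\|$.)

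You should know that the obstacle you flagged is precisely where the paper's own proof goes wrong: after the four applications of Lemma \ref{man5}, the paper asserts, citing ``the Cauchy--Schwarz inequality,'' that $\sum_i c_id_i$ is bounded by the \emph{sum} of the two square roots $\|u\|+\|v\|$, whereas Cauchy--Schwarz yields their \emph{product}; everything after that line inherits the error. So your instinct that the multiplicative coupling is the crux was sound, and your chain of estimates, finished honestly, proves the dimensionally correct multiplicative bound
\[
w\left(\left[\begin{array}{cc} A & B \\ C & D \end{array}\right]\right)\le M_1\,M_2,
\]
which is tight in the example above ($M_1M_2=100=w(T)$); alternatively, $ab\le\tfrac{1}{2}(a^2+b^2)$ yields the additive variant $w(T)\le\tfrac{1}{2}\left(M_1^2+M_2^2\right)$. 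Either of these is what the method actually delivers; the stated inequality is not provable because it is not true.
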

 \begin{proof}
Let $T=\left[\begin{array}{cc}
 A&B\\
 C&D
 \end{array}\right]$ and $\mathbf{x}=\left[
\begin{array}{c}
x_1 \\
x_2%
\end{array}%
\right] $ be a unit vector (i.e., $\|x_1\|^2+\|x_2\|^2=1$). Then

{\footnotesize\begin{eqnarray*}
\left\vert \left\langle T\mathbf{x},\mathbf{x}\right\rangle \right\vert & =&\left\vert
\left\langle \left[
\begin{array}{cc}
A& B \\
C & D%
\end{array}%
\right] \left[
\begin{array}{c}
x_1 \\
x_2%
\end{array}%
\right] ,\left[
\begin{array}{c}
x_1 \\
x_2%
\end{array}%
\right] \right\rangle \right\vert \\
& =&\left\vert \left\langle \left[
\begin{array}{c}
Ax_1+Bx_2 \\
Cx_1+Dx_2%
\end{array}%
\right] ,\left[
\begin{array}{c}
x_1 \\
x_2%
\end{array}%
\right] \right\rangle \right\vert \\
& =&\left\vert \left\langle Ax_1,x_1\right\rangle +\left\langle
Bx_2,x_1\right\rangle +\left\langle Cx_1,x_2\right\rangle +\left\langle
Dx_2,x_2\right\rangle \right\vert \\
& \leq&\left\vert \left\langle Ax_1,x_1\right\rangle \right\vert +\left\vert
\left\langle Bx_2,x_1\right\rangle \right\vert +\left\vert \left\langle
Cx_1,x_2\right\rangle \right\vert +\left\vert \left\langle
Dx_2,x_2\right\rangle \right\vert\\&&
\left\langle f_1^2(|A|)x_1,x_1\right\rangle^\frac{1}{2}  \left\langle g_1^2(|A^*|)x_1,x_1\right\rangle^\frac{1}{2}+
 \left\langle f_2^2(|B|)x_2,x_2\right\rangle^\frac{1}{2}  \left\langle g_2^2(|B^*|)x_1,x_1\right\rangle^\frac{1}{2} \\&&+
 \left\langle f_3^2(|C|)x_1,x_1\right\rangle^\frac{1}{2}  \left\langle g_3^2(|C^*|)x_2,x_2\right\rangle^\frac{1}{2}+
 \left\langle f_4^2(|D|)x_2x_2\right\rangle^\frac{1}{2}  \left\langle g_4^2(|D^*|)x_2,x_2\right\rangle^\frac{1}{2}\\&
 \leq&\left(\left\langle f_1^2(|A|)x_1,x_1\right\rangle+\left\langle g_2^2(|B^*|)x_1,x_1\right\rangle+\left\langle f_3^2(|C|)x_1,x_1\right\rangle+\left\langle g_4^2(|D^*|)x_2,x_2\right\rangle\right)^\frac{1}{2}\\&&
 \qquad\qquad\qquad\qquad\qquad\textrm{(\normalsize{by the Cauchy-Schwarz inequality})}\\&&+
 \left(\left\langle g_1^2(|A^*|)x_1,x_1\right\rangle+\left\langle f_2^2(|B|)x_2,x_2\right\rangle+\left\langle g_3^2(|C^*|)x_2,x_2\right\rangle+\left\langle f_4^2(|D|)x_1,x_1\right\rangle\right)^\frac{1}{2}\\&=&
 \left(\left\langle \left(f_1^2(|A|)+g_2^2(|B^*|)+f_3^2(|C|)\right)x_1,x_1\right\rangle+\left\langle g_2^2(|B^*|)x_2,x_2\right\rangle\right)^\frac{1}{2}\\&&+
 \left(\left\langle \left(f_2^2(|B|)+g_3^2(|C^*|)+f_4^2(|D|)\right)x_2,x_2\right\rangle+\left\langle g_1^2(|A^*|)x_1,x_1\right\rangle\right)^\frac{1}{2}
 \\&\leq&
 \left(\left\| f_1^2(|A|)+g_2^2(|B^*|)+f_3^2(|C|)\right\|\|x_1\|^2+\left\| g_4^2(|D^*|)\right\|\|x_2\|^2\right)^\frac{1}{2}\\&&+
 \left(\left\| f_2^2(|B|)+g_3^2(|C^*|)+f_4^2(|D|)\right\|\|x_2\|^2+\left\| g_1^2(|A^*|)\right\|\|x_1\|^2\right)^\frac{1}{2}.
\end{eqnarray*}}%
Let
\begin{align*}&\alpha=\left\| f_1^2(|A|)+g_2^2(|B^*|)+f_3^2(|C|)\right\|,\quad
\beta=\left\| g_4^2(|D^*|)\right\|,\\&
\mu=\left\| f_2^2(|B|)+g_3^2(|C^*|)+f_4^2(|D|)\right\|
\quad\textrm{and} \,\,
\lambda=\left\| g_1^2(|A^*|)\right\|.
\end{align*} It follows from
$$\underset{
  \|x_1\|^2+\|x_2\|^2=1
}{\max}\left(\alpha\|x_1\|^2+\beta\|x_2\|^2\right)=\underset{\theta \in
  [0,2\pi]
}{\max}(\alpha\sin^2\theta+\beta\cos^2\theta)=\max\{\alpha,\beta\}$$
and
$$\underset{
  \|x_1\|^2+\|x_2\|^2=1
}{\max}\left(\lambda\|x_1\|^2+\mu\|x_2\|^2\right)=\underset{\theta \in
  [0,2\pi]
}{\max}(\lambda\sin^2\theta+\mu\cos^2\theta)=\max\{\lambda,\mu\}$$
that
\begin{eqnarray*}
\left\vert \left\langle T\mathbf{x},\mathbf{x}\right\rangle \right\vert&\leq&
 \left(\left\| f_1^2(|A|)+g_2^2(|B^*|)+f_3^2(|C|)\right\|\|x_1\|^2+\left\| g_4^2(|D^*|)\right\|\|x_2\|^2\right)^\frac{1}{2}\\&&+
 \left(\left\| f_2^2(|B|)+g_3^2(|C^*|)+f_4^2(|D|)\right\|\|x_2\|^2+\left\| g_1^2(|A^*|)\right\|\|x_1\|^2\right)^\frac{1}{2}\\&\leq&
 \max\left\{\left\|f_1^2(|A|)+g_2^2(|B^*|)+f_3^2(|C|)\right\|^\frac{1}{2},\left\|g_4^2(|D^*|)\right\|^\frac{1}{2}\right\}
 \\&&+\max\left\{\left\|g_1^2(|A^*|)\right\|^\frac{1}{2},\left\|f_2^2(|B|)+g_3^2(|C^*|)+f_4^2(|D|)\right\|^\frac{1}{2}\right\}\\&=&
 \max\left\{\left\|f_1^2(|A|)+g_2^2(|B^*|)+f_3^2(|C|)\right\|^\frac{1}{2},\left\|g_4(|D^*|)\right\|\right\}
 \\&&+\max\left\{\left\|g_1(|A^*|)\right\|,\left\|f_2^2(|B|)+g_3^2(|C^*|)+f_4^2(|D|)\right\|^\frac{1}{2}\right\}.
\end{eqnarray*}%
Taking the supremum over all unit vectors $\mathbf{x}$ we get the desired result.
\end{proof}
\begin{corollary}
 Let $A, B, C, D\in {\mathbb B}({\mathscr H})$. Then
\begin{eqnarray*}
\omega \left(\left[\begin{array}{cc}
 A&B\\
 C&D
 \end{array}\right]\right)&\leq&\max\left\{\left\||A|^{2\alpha}+|B^*|^{2\gamma}+|C|^{2\mu}\right\|^\frac{1}{2},\left\||D^*|^{\omega}\right\|\right\}
 \\&&+\max\left\{\left\||A^*|^{\beta}\right\|,\left\||B|^{2\zeta}+|C^*|^{2\nu}+|D|^{\kappa}\right\|\right\},
\end{eqnarray*}%
where $\alpha+\beta=\gamma+\zeta=\mu+\nu=\omega+\kappa=1$. In particular, \begin{eqnarray*}
\omega \left(\left[\begin{array}{cc}
 A&B\\
 0&0
 \end{array}\right]\right)\leq\max\left\{\left\||A^*|^{\beta}\right\|,\left\||B|^{\zeta}\right\|\right\}+\left\||A|^{2\alpha}+|B^*|^{2\gamma}\right\|^\frac{1}{2},
 \end{eqnarray*}%
in which $\alpha+\beta=\gamma+\zeta=1$.
 \end{corollary}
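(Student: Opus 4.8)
The plan is to obtain this corollary as a direct specialization of the preceding theorem, choosing the eight functions $f_i,g_i$ to be the appropriate power functions. Concretely, I would take $f_1(x)=x^{\alpha}$, $g_1(x)=x^{\beta}$; $f_2(x)=x^{\zeta}$, $g_2(x)=x^{\gamma}$; $f_3(x)=x^{\mu}$, $g_3(x)=x^{\nu}$; and $f_4(x)=x^{\kappa}$, $g_4(x)=x^{\omega}$. Since $\alpha+\beta=\gamma+\zeta=\mu+\nu=\omega+\kappa=1$, each pair satisfies the required relation $f_i(x)g_i(x)=x^{1}=x$ on $[0,\infty)$, and because all the exponents lie in $[0,1]$ the functions $f_i,g_i$ are non-negative and continuous on $[0,\infty)$. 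Thus the hypotheses of the theorem are met for these choices.

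Next I would substitute these functions into the theorem's bound and evaluate each factor by the continuous functional calculus: $f_1^{2}(|A|)=|A|^{2\alpha}$, $g_2^{2}(|B^{*}|)=|B^{*}|^{2\gamma}$, $f_3^{2}(|C|)=|C|^{2\mu}$, $g_4(|D^{*}|)=|D^{*}|^{\omega}$, and symmetrically $g_1(|A^{*}|)=|A^{*}|^{\beta}$, $f_2^{2}(|B|)=|B|^{2\zeta}$, $g_3^{2}(|C^{*}|)=|C^{*}|^{2\nu}$, $f_4^{2}(|D|)=|D|^{2\kappa}$. Reading off term by term produces the first displayed inequality (the last summand under the second maximum being $|D|^{2\kappa}$, with the whole norm raised to the power $\tfrac{1}{2}$). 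This step is purely mechanical and involves no further estimation.

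For the particular case I would set $C=0$ and $D=0$ in the first inequality, so that $|C|$, $|C^{*}|$, $|D|$ and $|D^{*}|$ all vanish and every summand containing $C$ or $D$ drops out. The first maximum then collapses to $\bigl\||A|^{2\alpha}+|B^{*}|^{2\gamma}\bigr\|^{1/2}$, its competitor being $0$, while the second maximum becomes $\max\bigl\{\||A^{*}|^{\beta}\|,\,\||B|^{2\zeta}\|^{1/2}\bigr\}$. To reach the claimed form I would invoke the spectral identity $\||B|^{2\zeta}\|^{1/2}=\|B\|^{\zeta}=\||B|^{\zeta}\|$, which holds because $|B|$ is positive, so its norm equals its spectral radius and hence $\||B|^{2\zeta}\|=\|B\|^{2\zeta}$. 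Adding the two maxima yields exactly the stated particular inequality.

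I do not expect a genuine obstacle here, since all the content is already carried by the theorem and the corollary merely records the power-function instance together with the degenerate $C=D=0$ reduction. The only point requiring care is the bookkeeping of which exponent attaches to which factor: within each pair the two functions play asymmetric roles, one feeding the ``$f^{2}$'' slot on a modulus and the other the ``$g$'' slot on an adjoint modulus, so a careless swap of $\alpha$ with $\beta$ (or $\gamma$ with $\zeta$, etc.) would corrupt the bound. Keeping the four constraints $\alpha+\beta=\gamma+\zeta=\mu+\nu=\omega+\kappa=1$ visible throughout, and tracking the square-root scaling $\||B|^{2\zeta}\|^{1/2}=\||B|^{\zeta}\|$ in the reduction, is enough to prevent any mismatch.
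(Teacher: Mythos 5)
Your proof is correct and is exactly the intended argument: the paper states this corollary without proof as the immediate specialization of the preceding theorem to power functions, with precisely the pairing you chose ($f_1=x^{\alpha},g_1=x^{\beta}$; $f_2=x^{\zeta},g_2=x^{\gamma}$; $f_3=x^{\mu},g_3=x^{\nu}$; $f_4=x^{\kappa},g_4=x^{\omega}$), followed by the degenerate reduction $C=D=0$. You also correctly flag what are evidently typos in the printed statement — the substitution actually yields $\left\| |B|^{2\zeta}+|C^{*}|^{2\nu}+|D|^{2\kappa}\right\|^{\frac{1}{2}}$ rather than $\left\| |B|^{2\zeta}+|C^{*}|^{2\nu}+|D|^{\kappa}\right\|$ in the second maximum — and your identity $\left\| |B|^{2\zeta}\right\|^{\frac{1}{2}}=\left\| |B|^{\zeta}\right\|$ for the particular case is valid by the spectral theorem for the positive operator $|B|$.
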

\bigskip
\bibliographystyle{amsplain}

\end{document}